\newtheorem{theorem}{Theorem}[section]
\newtheorem{proposition}[theorem]{Proposition}
\newtheorem{remark}{Remark}
\newtheorem{defn}[theorem]{Definition}
\newtheorem{lemma}[theorem]{Lemma}
\newcommand\R{\mathbb{R}}
\newcommand\RN{\mathbb{R}^N}
\numberwithin{equation}{section}
\begin{document}

\title{
Fractional powers of the backward heat operator and Carleman type inequalities}

\author{Diana Stan}
\address[D. Stan]{University of Cantabria, Department of Mathematics, Statistics and Computation,
Avd. Los Castros 44, 39005 Santander, Spain}
\email{\href{mailto:diana.stan@unican.es}{\textnormal{diana.stan@unican.es}}}

\keywords{Carleman estimate, fractional heat operator, backward heat operator, fractional Laplacian}

\subjclass[2010]{Primary: 35B05; Secondary: 35B45}

\thanks{D. Stan is supported by the project  PID2020-114593GA-I00 financed by MCIN/AEI/10.13039/501100011033 and by the project "Ecuaciones en derivadas parciales motivadas por procesos de difusión y mecánica de fluidos: propiedades, asintóticas y homogeneización" (Ayuda financiada contrato Programa Gob. Cantabria -UC)}

\begin{abstract}
	In this paper we derive the  fractional power of the backward heat operator as a high dimensional limit of the fractional Laplacian. As applications, we  derive  Carleman type inequalities for  fractional powers of the backward heat operator.
\end{abstract}

\maketitle

\section{Introduction and main results}

Fractional powers of the heat operator  $(-\Delta+\partial_t)^s$  have garnered significant attention within the mathematical community in recent years. One basic reference is the paper of Metzler and Klafter \cite{Metzler} where $(-\Delta+\partial_t)^s$ is derived via a master equation.
The exploration of this concept has led to many works addressing its definition and fundamental properties \cite{StingaTorreaMastersEq},  study of partial differential equations involving it \cite{Audrito, Audrito-Terracini,Ferreira-DePablo} or obstacle problems \cite{Athanasopoulos}, among others.

Our research is motivated by the strong unique continuation property. This principle asserts that if
$u$ is a solution to  $(-\Delta+\partial_t)^s u=Vu$ and  $u$ vanishes of infinite order in a point $(x_0,t_0)$ then $u$ must be identically zero.
Various methodologies exist to establish unique continuation, among which Carleman-type inequalities and the monotonicity of the Almgren frequency formula. Both approaches work with suitable compactly supported or with sufficiently good decaying functions. However, in the case of fractional operators, establishing unique continuation poses a great challenge due to the  difficulty in localizing functions when subjected to multiplication by cut-off functions. Consequently, a significant number of published works on unique continuation employs the technique of the extension problem, wherein the solution to the nonlocal problem is derived as a limit of solutions to a local problem posed in a higher dimension.

Notable contributions to the unique continuation problem for the operator  $(-\Delta+\partial_t)^s$ include the work of Banerjee and  Garofalo \cite{BanerjeeGarofalo}. The authors study equations of the form $(-\Delta+\partial_t)^s u=Vu$ for $x\in \R^n $ and $t\in \R$, and $V(x,t)$ is a potential with certain regularity and decay properties.
They show that, if $u$ vanishes of infinite order backward in time at a point $(x_0,t_0)$  in the sense that
$$
  \text{esssup}_{B_r(x_0) \times (t_0-r^2, t_0]} |u(x,t)| = O(r^k), \qquad \text{for all } k>0,
$$
 then $u(\cdot, t) \equiv 0$ for all $t\le t_0$. In fact, in \cite{BanerjeeGarofalo2}, they show that $u\equiv 0$ in all $\R^n  \times \R.$
 These works predominantly rely on the Almgren monotonicity formula and use the technique of the extension problem, where $(-\Delta+\partial_t)^s u$ is derived as the limit to the boundary of a Neumann weighted derivative of the solution $U(x,y,t)$ to some extended problem in $\R^n \times [0,\infty) \times \R$ that takes boundary data $U(x,0,t)=u(x,t)$.

An alternative approach involves establishing Carleman-type inequalities, as it has been done for instance in the local case $s=1$ by Escauriaza \cite{EscauriazaDuke2000} and Fern\'andez  \cite{FernandezCPDEs2003}.

A different  but significantly motivating topic for our work is the  paper \cite{DaveyARMA2018} by Davey, in which the author proves that the backward heat operator $(\Delta_x+\partial_t)[u(x,t)]$  for $x\in \R^d$ can be obtained as a high dimensional limit as $N\to \infty$ of the Laplacian $\Delta_{x,y} \Big[u\big(x,\frac{|y|^2}{2N}\big) \Big]$ for $x\in \R^d $ and $y\in \R^N$.  This relation has many useful applications as the author shows in \cite{DaveyARMA2018}, and afterwards in \cite{DaveyGarcia}. Related to the motivation of our current paper, in \cite{DaveyARMA2018}, the author shows that starting from the Carleman type estimate for $\Delta_{x,y}$ proved by Amrein, Berthier and Georgescu in \cite{Amrein}, one can derive a Carleman type estimate for $\Delta_x+\partial_t$ which is the same as the one proved by Escauriaza in \cite{EscauriazaDuke2000}.

In this paper we work with fractional powers of the Laplace and backward heat operators by defining them via the subordination formula. For suitable functions $u:\R^d\times [0,\infty) \to \R$, we prove  in Proposition \ref{prop:transf} that,
if $G_N:\R^d \times [0,\infty) \to \R$ is  the notation for
  $$G_N\big(x, \frac{|y|^2}{2N}  \big):= (-\Delta_{x,y})^s \left[ u\big(x,\frac{|y|^2}{2N}\big)  \right] ,$$ then
  $$G_{N} (x,t) \rightarrow  ( -\Delta_x-\partial_t)^{s} [u(x,t)] \quad \text{as }N\to \infty. $$
Then, by using some weighted Hardy-Rellich type inequalities  for $(-\Delta_{x,y})^s$ proved by Eilertsen \cite{EilertsenJFA2001}  and by de Nitti and Djitte \cite{DeNitti-FracIneq-2024}, both with explicit constants, we are able to deduce the following Carleman type inequalities for  the backward in time  operator $( -\Delta_x-\partial_t)^s$. Our choice to work with the backward heat operator rather than the heat operator is motivated by the strong unique continuation results proved via Carleman-type inequalities available in the local case $s=1$ \cite{EscauriazaDuke2000,FernandezCPDEs2003}.


For functions $u : \R^d \times [0,\infty)$ we define the even extension with respect to the last variable as
\begin{equation}\label{even-extension}
 \tilde{u}(x,t)=  \begin{cases}
                     u(x,t),& \quad  \mbox{if }  \quad x\in \R^d, \,  t\ge 0,  \\
                     u(x,-t), &  \quad  \mbox{if } \quad   x\in \R^d,\,  t< 0.
                   \end{cases}
\end{equation}
We will denote by $\mathcal{S}(\R^{d+1})$ the Schwartz space of smooth rapidly decaying functions.

\begin{theorem}\label{main:thm} Let $s  \in (0,1)$  and $\eta >0$ such that $ 2s+\eta\not\in \mathbb{N}$.
Let $u : \R^d \times [0,\infty)$ such that $\tilde{u} \in \mathcal{S}(\R^{d+1})$ ,  where $\tilde{u}$  is defined in $\eqref{even-extension}.$
Then
\begin{align}\label{Carleman}
	&\int_{\R^{d}}\int_0^\infty |u(x,t )|^2 \cdot e ^{-\frac{|x|^2}{4t}}  \cdot   t^{\frac{  2\eta-d -2 }{2}} \, dt\,  dx \\ \nonumber
	&\le  C\int_{\R^{d}}\int_0^\infty  \left| ( -\Delta_x-\partial_t)^{s} [u(x,t)]  \right|^2  \cdot e ^{-\frac{|x|^2}{4t}}  \cdot t^{ 2s + \frac{2\eta -d-2}{2}} \, dt \, dx.
\end{align}
Here, $C$ is a positive constant given by $$
C = \max_{j \in \{ 0,1,\dots, j_1+1 \}} \left(  \frac{ \Gamma\big( \frac{\eta+j}{2} \big)   }
{ \Gamma\big( \frac{2s+\eta+j}{2} \big)} \right)^2,$$
where $j_1$ to be the smallest non-negative integer satisfying
$$0\le j_1 + s + \eta  \quad  \text{and}  \quad    \eta(2s+\eta) \le   j_1^2  . $$
\end{theorem}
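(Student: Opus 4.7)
The plan is to apply a weighted Hardy--Rellich inequality for $(-\Delta_{x,y})^s$ on $\R^{d+N}$ to the lifted function $v_N(x,y):=\tilde u(x,|y|^2/(2N))$, pass to polar coordinates in $y$, substitute $t=|y|^2/(2N)$, and take the limit $N\to\infty$ using Proposition \ref{prop:transf}. Concretely, the Eilertsen/de Nitti--Djitte inequality will be used in the form
\begin{equation*}
\int_{\R^{d+N}} |v_N|^2 |z|^{2a_N-4s}\,dz \;\le\; C_N \int_{\R^{d+N}} \bigl|(-\Delta_{x,y})^s v_N\bigr|^2 |z|^{2a_N}\,dz,
\end{equation*}
where $z=(x,y)$ and $C_N$ is the explicit constant from those papers.

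The choice of $a_N$ is forced by two requirements. After writing $y=r\omega$ with $\omega\in S^{N-1}$ and substituting $t=r^2/(2N)$ one has $r^{N-1}\,dr = (2N)^{N/2}\,t^{(N-2)/2}\,dt/2$ and $|z|^{2a_N}=(|x|^2+2Nt)^{a_N}=(2Nt)^{a_N}(1+|x|^2/(2Nt))^{a_N}$. To recover the Gaussian weight via the limit $(1+|x|^2/(2Nt))^{-N/2}\to e^{-|x|^2/(4t)}$, one must take $a_N = -N/2+\gamma$ with $\gamma=\gamma(d,\eta,s)$ fixed; matching the power of $t$ on the LHS to $t^{(2\eta-d-2)/2}$ then pins down $\gamma=\eta-d/2+2s$. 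With this choice a common prefactor $|S^{N-1}|(2N)^{\eta-d/2}/2$ appears on both sides and cancels, leaving the inequality
\begin{equation*}
\int |u|^2 t^{\frac{2\eta-d-2}{2}}\bigl(1+\tfrac{|x|^2}{2Nt}\bigr)^{a_N}\,dt\,dx \;\le\; C_N\,(2N)^{2s}\int |G_N|^2 t^{\frac{2\eta-d-2}{2}+2s}\bigl(1+\tfrac{|x|^2}{2Nt}\bigr)^{a_N-2s}\,dt\,dx.
\end{equation*}
Dominated convergence, justified by the Schwartz decay of $\tilde u$ and the uniform-in-$N$ control on $G_N$ coming with Proposition \ref{prop:transf}, then passes to the limit and produces \eqref{Carleman}, provided $C_N(2N)^{2s}$ has a finite limit equal to the constant $C$ stated in the theorem.

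The main obstacle will be the careful analysis of $C_N$. By spherical harmonic decomposition on $S^{d+N-1}$ together with Mellin analysis, $C_N$ is a supremum over degrees $j$ of the reciprocal squared modulus of the Mellin multiplier of $(-\Delta_{x,y})^s$ on the $j$-th sector, which is a product of four Gamma functions in $j$, $d$, $N$, $s$ and $a_N$. Using the duplication identity $\Gamma(z)\Gamma(z+1/2)=2^{1-2z}\sqrt\pi\,\Gamma(2z)$ one should be able to isolate a factor $(2N)^{-2s}$ inside $C_N$ so that $(2N)^{2s}C_N$ has a pointwise-in-$j$ limit equal to $\bigl(\Gamma(\tfrac{\eta+j}{2})/\Gamma(\tfrac{2s+\eta+j}{2})\bigr)^2$. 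To conclude one must (i) commute the supremum over $j$ with the limit $N\to\infty$, and (ii) show that this Gamma ratio is eventually decreasing in $j$, with the monotonicity threshold controlled by $\eta(2s+\eta)\le j^2$; this is what produces the cutoff $j_1+1$ in the displayed maximum. The hypothesis $2s+\eta\notin\mathbb N$ is needed throughout to avoid poles of the Gamma functions and to ensure that the Mellin multipliers do not vanish.
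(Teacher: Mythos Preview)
Your proposal is correct and follows essentially the same route as the paper: lift $u$ to $\R^{d+N}$, apply Eilertsen's weighted inequality there, pass to polar coordinates with the substitution $t=r^2/(2N)$, and send $N\to\infty$ using Proposition~\ref{prop:transf}. The only notable difference is in the treatment of the constant: the paper does not re-derive Eilertsen's constant via spherical harmonics/Mellin analysis nor use the duplication formula, but simply quotes the explicit expression \eqref{CNd}, shows that Eilertsen's optimal index $j_0(N)$ is bounded by $j_1$ uniformly in $N$ (so the maximum ranges over a fixed finite set), and then uses the elementary asymptotic $\Gamma(z+a)/\Gamma(z+b)\sim z^{a-b}$ to see that the $N$-dependent Gamma ratio, multiplied by $(N/2)^{s}$, tends to $1$.
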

Negative values of $\eta$ can be also considered, as long as the function $(x,y)\mapsto u\big(x,\frac{|y|^2}{N}\big)$ satisfies the hypothesis of Lemma \ref{Lemma:Eilertsen} for all $N\in \mathbb{N}$.

A generalization to $L^p$ norms is presented in the next theorem, although we establish it only for compactly supported functions.

\begin{theorem}\label{Thm:CarlemanLp}
Let $d\ge 1$, $s\in (0,1)$,  $1<p<+\infty$, $\eta  \in \R$ such that $d-2\eta>0$. Then
    \begin{align}\label{Carleman-Lp}
&  \left(\frac{\tilde{b}_{d,\eta,s}}{p}\right)^p \int_{\R^{d}}\int_0^\infty |u(x,t )|^p   \cdot e ^{-\frac{|x|^2}{4t}}  \,   t^{\frac{  2\eta-d -2 }{2}}  dt \, dx \\
\nonumber &\le   \int_{\R^{d}}\int_0^\infty  \left| ( -\Delta_x-\partial_t)^{s} [u(x,t)]  \right|^p  \cdot e ^{-\frac{|x|^2}{4t}} \,  t^{ sp + \frac{2\eta -d-2}{2}} dt\, dx,
\end{align}
for all $u : \R^d \times[0,\infty) \to \R$ such that $ \tilde{u} \in S(\R^{d+1})   \cap C^{1,1}_c (\R^{d+1})$, where $\tilde{u}$  is defined in $\eqref{even-extension}.$ The constant is given by $\tilde{b}_{d,\eta,s}=\frac{
  \Gamma\left(  \frac{ - 2\eta-2s}{2} \right)
  }
  {
   \Gamma \left(\frac{2\eta-d}{2} \right)
   } $.
\end{theorem}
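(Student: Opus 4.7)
The plan is to mirror the strategy used for Theorem \ref{main:thm}, but with Eilertsen's weighted $L^2$ Hardy--Rellich inequality replaced by the weighted $L^p$ fractional Hardy inequality of de Nitti and Djitte. Concretely, I would set $v_N(x,y) := u\bigl(x, |y|^2/(2N)\bigr)$ on $\R^{d+N}$; since $v_N$ depends on $y$ only through $|y|$, the de Nitti--Djitte inequality in dimension $d+N$, applied with an appropriately chosen radial weight, yields a two--sided estimate for $v_N$ in terms of $(-\Delta_{x,y})^s v_N$. The target inequality \eqref{Carleman-Lp} will then follow after (i) reducing the $\R^{d+N}$--integrals to $\R^d\times[0,\infty)$--integrals in the variables $(x,t)$, (ii) transferring $(-\Delta_{x,y})^s$ to $(-\Delta_x-\partial_t)^s$ via Proposition \ref{prop:transf}, and (iii) passing to the limit $N\to\infty$.

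For the first reduction I would use spherical coordinates in $y$, so that $dy = |S^{N-1}|\,r^{N-1}\,dr$, together with the change of variables $t = r^2/(2N)$. This converts the Jacobian $r^{N-1}\,dr$ into an explicit power of $t$; the $N$--dependent exponent in the de Nitti--Djitte radial weight is then chosen so that the resulting $t$--weights on both sides are exactly $t^{(2\eta-d-2)/2}$ and $t^{sp+(2\eta-d-2)/2}$, independently of $N$. The Gaussian factor $e^{-|x|^2/(4t)}$ is produced by the identity $|x|^2/(4t)=N|x|^2/(2|y|^2)$ valid on the slice $|y|^2=2Nt$; concretely, one applies the de Nitti--Djitte inequality to the twisted function $v_N(x,y)\,e^{-|x|^2/(4p\cdot|y|^2/(2N))}$, or equivalently absorbs the factor $e^{-N|x|^2/(2|y|^2)}$ into the radial weight.

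For the right--hand side I would invoke Proposition \ref{prop:transf}, which yields the pointwise convergence $(-\Delta_{x,y})^s v_N(x,y)\to (-\Delta_x-\partial_t)^s u(x,t)$ along the slice $|y|^2=2Nt$ as $N\to\infty$. The hypothesis $\tilde u \in \mathcal{S}(\R^{d+1})\cap C^{1,1}_c(\R^{d+1})$ provides uniform pointwise control of the integrand through the singular--integral representation of $(-\Delta_{x,y})^s v_N$, which together with dominated convergence should promote the pointwise limit to a limit of the weighted $L^p$ integrals. I expect this interchange to be the principal technical obstacle: the $L^p$ setting (as opposed to $L^2$) precludes any use of Plancherel--type arguments, and the anisotropic weight, which degenerates as $|y|\to 0$, forces a careful analysis near that set.

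Finally, to match the constants, I would track the explicit $N$--dependence of the de Nitti--Djitte constant $b_{d+N,\eta,s}$ together with the prefactors $|S^{N-1}|$ and the Jacobians from the two changes of variables. Writing the constant as ratios of $\Gamma$--factors of the form $\Gamma((\mathrm{const}+N)/2)$ and applying the classical asymptotic $\Gamma(a+N)/\Gamma(b+N)\sim N^{a-b}$, all explicit powers of $N$ should cancel and the limit should be precisely $\bigl(\tilde b_{d,\eta,s}/p\bigr)^p$, yielding \eqref{Carleman-Lp}.
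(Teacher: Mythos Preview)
Your overall strategy matches the paper's --- apply Lemma~\ref{Lemma:deNitti} in $\R^{d+N}$ to $v_N(x,y)=u\bigl(x,|y|^2/(2N)\bigr)$, pass to polar coordinates in $y$, change variables $t=r^2/(2N)$, invoke Proposition~\ref{prop:transf}, and track constants via Gamma asymptotics --- and those steps are correctly identified. The genuine gap is your mechanism for producing the Gaussian weight $e^{-|x|^2/(4t)}$. Neither proposal works: applying the inequality to the twisted function $v_N\,e^{-N|x|^2/(2p|y|^2)}$ places the exponential \emph{inside} $(-\Delta_{x,y})^s$ on the right-hand side, and since the fractional Laplacian does not commute with multiplication you can no longer isolate $(-\Delta_{x,y})^s v_N$ and hence cannot invoke Proposition~\ref{prop:transf}; and ``absorbing'' $e^{-N|x|^2/(2|y|^2)}$ into the weight is not allowed, because Lemma~\ref{Lemma:deNitti} requires the weight to be a pure power of the full Euclidean norm $|(x,y)|$, which this anisotropic factor is not.

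The Gaussian in fact appears without any extra device. Apply Lemma~\ref{Lemma:deNitti} directly to $v_N$ with $\theta=N+d-2\eta-2s$, so that the left-hand weight is $|(x,y)|^{2\eta-N-d}$. Factor it as
\[
(|x|^2+|y|^2)^{\frac{2\eta-N-d}{2}} = \Bigl(1+\tfrac{|x|^2}{|y|^2}\Bigr)^{\frac{2\eta-N-d}{2}}\,|y|^{2\eta-N-d};
\]
after polar coordinates in $y$ and the substitution $t=|y|^2/(2N)$, the power of $|y|$ combines with the Jacobian $r^{N-1}\,dr$ to give exactly the $N$-independent $t$-weights you already anticipated, while the remaining factor becomes $\bigl(1+|x|^2/(2Nt)\bigr)^{(2\eta-N-d)/2}\to e^{-|x|^2/(4t)}$ as $N\to\infty$ by the elementary limit $(1+a/N)^{-N/2}\to e^{-a/2}$. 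With this correction the rest of your outline (Proposition~\ref{prop:transf}, dominated convergence, and the Gamma asymptotics $(2N)^{-s}\,\Gamma\bigl(\tfrac{N+d-2\eta}{2}\bigr)/\Gamma\bigl(\tfrac{N+d-2\eta-2s}{2}\bigr)\to 1$) goes through exactly as in the paper.
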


\begin{remark} \begin{enumerate}
              \item[(i)] If $s=1$ and $\frac{  2\eta-d -2 }{2} <0$ in \eqref{Carleman}, we recover the result  proved by Escauriaza in \cite[Thm.1]{EscauriazaDuke2000} for the backward heat operator $\Delta_x+\partial_t$.
\item[(ii)] To our knowledge, there are no references that address Carleman inequalities for the fractional operator $(-\Delta-\partial_t)^s$ and we believe that estimates \eqref{Carleman}  and \eqref{Carleman-Lp} are new.
                   \end{enumerate}
\end{remark}

\section{Notations and preliminaries}

 \subsection{The functional space} We denote by $\mathcal{S}(\mathbb{R}^n)$  the Schwartz class of real valued smooth rapidly decaying functions.  Let $\mathcal{L}(\mathbb{R}^n)$ be the Lizorkin space (see \cite{EilertsenJFA2001}, \cite[Section 4]{Troyanov}), that is the closed subspace of $\mathcal{S}(\mathbb{R}^n)$ of functions that have all the $k$ moments equal to zero:
$$\mathcal{L}=\left\{ f \in \mathcal{S}(\mathbb{R}^n):   \partial^{k} \widehat{f}(0)=0, \, \forall k \in \mathbb{N}^n  \right\} = \left\{ f \in \mathcal{S}:  \int_{\mathbb{R}^n } f(x) |x|^k dx= 0, \, \forall k \in \mathbb{N}^n  \right\}. $$
It is known that the closure of $\mathcal{L}$ in the $L ^\infty$ norm is the $C_0^{\infty}(\mathbb{R}^n)$, the space of smooth functions that vanish at infinity (see \cite[Thm.3.2]{Neumayer-Unser}).
$\mathcal{L}$ is a good space for defining the fractional Laplacian, since in this space the operator becomes invertible.
 In Fourier space, $\mathcal{L}$ can be seen as:
 $$\widehat{\mathcal{L}} (\mathbb{R}^n)= \{ \hat{f}: f\in \mathcal{L}(\mathbb{R}^n) \} = \{  \psi \in \mathcal{S}: \partial^{k} \psi(0)=0, \, \forall k \in \mathbb{N}^n\},$$
 where $\widehat{f}$ is the Fourier transform of $u$ and $k$ is a multi-index.   When the expression is too large, we denote the Fourier transform by $\mathcal{F}$.  The Lizorkin space is need only in the hypothesis of Lemma \ref{Lemma:Eilertsen} in the case $\eta<0$, otherwise we work only in the Schwartz class.

\subsection{The fractional Laplacian}  
The fractional Laplacian $(-\Delta)^s f$ can be represented by many equivalent definitions, among which we recall three of them \cite{Bucur,Kwasnicki,Garofalo-Thoughts}:
\begin{enumerate}
\item[(i)]  Via the Fourier transform:  for $f \in \mathcal{S}(\mathbb{R}^n)$ and $s \in \mathbb{R}$, $(-\Delta)^s f $ is defined such that
    $$  \mathcal{F}( (-\Delta)^s f ) (\xi)= |\xi|^{2s} \hat{f}(\xi).$$
    Then $(-\Delta)^s u : \mathcal{L} \to \mathcal{L}$ is a bijection.
    Moreover, if $s>-n/2$, then  $(-\Delta)^s f (x)= O(|x|^{-n-2s})$ when $|x| \to +\infty$, for any $f \in \mathcal{S}(\mathbb{R}^n)$.

\item[(ii)] Via the subordination formula, also known as Bochner's integral: for $s \in (0,1) $
\begin{equation}\label{FracLapSub}
(-\Delta)^s f(x)=\frac{1}{|\Gamma(-s)|} \int_0^ \infty \left( e^ {-\tau\Delta} f(x) - f(x) \right) \frac{d\tau}{\tau^ {1+s}}.
\end{equation}
The inverse fractional Laplacian is the Riesz potential  that, for $0<s<n$,  is given by:
\begin{equation}\label{InverseFracLapSub}
(-\Delta)^{-s} f(x)=\frac{1}{\Gamma(s)} \int_0^ \infty  e^ {-\tau\Delta} f(x)  \frac{d\tau}{\tau^ {1-s}},
\end{equation}
where $e^ {-\tau\Delta} f(x)$ is the heat semigroup with initial data $f$ and time variable $\tau$, that is  $U(x, \tau)= e^ {-\tau\Delta} f(x)$ is the solution to the heat equation with data $f(x)$:
$$
\frac{d}{d\tau} U(x,\tau)=\Delta U(x,\tau), \quad  U(x,0)=f(x),\quad x\in \R^n.
$$
\item[(iii)] Via the Levy-Khintchine integral representation:
\begin{equation}\label{def:fractLap}
(-\Delta)^s f(x)=  \frac{4^s \Gamma(n/2+s)}{\pi^{n/2} |\Gamma(-s)|}\,  \text{P.V.} \int_{\R^n} \frac{f(x)-f(y)}{|x-y|^{n+2s}}dy.
\end{equation}
\end{enumerate}
It is known that the fractional Laplacian of radial functions is also radial. In the following lemma, we show that, if the function is radial with respect to only some of the coordinates, then its fractional Laplacian has the same property.

In the following we will use the notation $\Delta_{x,y} =\Delta_x + \Delta_y =\sum_{i=1}^d \partial_{x_i}^2  + \sum_{i=1}^N \partial_{y_i}^2  $ applied to functions $f: \R^{d+N}\to \R$.
\begin{lemma} \label{Lemma:radial}
Let $u:\mathbb{R}^{d} \times [0,\infty) \to  \mathbb{R}$. Let $f:  \mathbb{R}^{d+N} \to  \mathbb{R}$ be defined as
$$f(x,y)=u(x,|y|).$$
Then $(-\Delta_{x,y})^s [f(x,y)]$ is radial with respect to the $y$ coordinates.
\end{lemma}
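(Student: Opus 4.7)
The plan is to exploit the rotation invariance of $(-\Delta)^s$ on $\mathbb{R}^{d+N}$. Because $f(x,y)=u(x,|y|)$ depends on $y$ only through its norm, rotating the $y$ coordinates leaves $f$ unchanged, and one then only needs to know that the fractional Laplacian commutes with orthogonal transformations.

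More precisely, I would proceed as follows. First, for any $R \in O(N)$ define the block-orthogonal map $\tilde{R} : \mathbb{R}^{d+N} \to \mathbb{R}^{d+N}$ by $\tilde{R}(x,y)=(x,Ry)$. Then $\tilde R$ belongs to $O(d+N)$. Next, I recall that $(-\Delta_{x,y})^s$ commutes with composition by orthogonal transformations: this is immediate from the Fourier characterization in (i), since $\widehat{f\circ \tilde R}(\xi,\zeta)=\widehat f(\tilde R^{-1}(\xi,\zeta))$ and the multiplier $|(\xi,\zeta)|^{2s}$ is rotation invariant, giving
\begin{equation*}
(-\Delta_{x,y})^s\bigl[f\circ \tilde R\bigr](x,y) = \bigl[(-\Delta_{x,y})^s f\bigr](\tilde R(x,y)).
\end{equation*}
(Alternatively, one reads this directly from the Lévy--Khintchine representation \eqref{def:fractLap} by the change of variables $z \mapsto \tilde R z$.)

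Finally, since $|Ry|=|y|$, we have $f(\tilde R(x,y))=u(x,|Ry|)=u(x,|y|)=f(x,y)$, i.e.\ $f\circ\tilde R=f$. Combining this with the displayed identity yields
\begin{equation*}
\bigl[(-\Delta_{x,y})^s f\bigr](x,Ry) = \bigl[(-\Delta_{x,y})^s f\bigr](x,y) \qquad \text{for every } R\in O(N),
\end{equation*}
which is exactly the statement that $(-\Delta_{x,y})^s f(x,y)$ is radial in $y$.

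There is no real obstacle here; the only thing to be careful about is the functional setting, namely that $f=u(x,|y|)$ lies in a class where the Fourier (or Lévy--Khintchine) definition of $(-\Delta_{x,y})^s$ is legitimate, which is guaranteed under the hypotheses used later in Proposition~\ref{prop:transf}.
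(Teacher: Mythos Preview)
Your proof is correct and follows essentially the same idea as the paper: both exploit that $(-\Delta_{x,y})^s$ commutes with orthogonal transformations acting on the $y$ block, together with the invariance $f\circ\tilde R=f$. The paper carries this out directly via the L\'evy--Khintchine integral \eqref{def:fractLap} with the change of variables $z=R\bar z$ (which you mention as your alternative route), whereas you phrase it first through the Fourier multiplier; the content is the same.
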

\begin{proof}
The proof can be done using definition \eqref{def:fractLap}.
Let $x\in\mathbb{R}^{d}$ and $  y\in \mathbb{R}^{N}$. Let $R \in \mathbb{R}^{N\times N}$ be a rotation matrix in $\mathbb{R}^{N}$. Then $|Rz|=|z|$ for all $z\in \mathbb{R}^{N}$.
We have:
\begin{align*}
(-\Delta_{x,y} )^s [f](x,Ry)&= C \,  \text{P.V.} \int_{\R^d}\int_{\R^N} \frac{u(w,|z|)-u(x,|Ry|)}{ |(w,z)-(x,Ry)|^{N+d+2s} }dy \, dx.
\end{align*}
We perform the change of coordinates: $z=R\, \overline{z}$. Thus $|z|=|\overline{z}|$,   $dz= d\overline{z}$ and
$$ |(w,R\overline{z})-(x,Ry)|^2 = |w-x|^2 +  |R\cdot (\overline{z}-  y)|^2 = |w-x|^2 +  |\overline{z}-  y|^2 = |(w,\overline{z})-(x,y)|^2 ,$$
and then we obtain
\begin{align*}
(-\Delta_{x,y} )^s [f](x,Ry)&=  C   \,  \text{P.V.} \int_{\R^d}\int_{\R^N} \frac{u(w,|\overline{z}|)-u(x,|y|)}{|(w,\overline{z})-(x,y)|^{N+d+2s}}dy \, dx \\
&=(-\Delta_{x,y} )^s [f](x,y).
\end{align*}
Additionally, in formula \eqref{radial:U-N}, we will see an alternative proof using the subordination formula.
\end{proof}

\subsection{Weighted estimates for the fractional Laplacian}
We state two fractional Hardy-Rellich type inequalities  for the fractional Laplacian. The first one was obtained by Eilertsen \cite{EilertsenJFA2001}. The class of functions for which this inequality applies is explained in \cite[p.362]{EilertsenJFA2001} and we recall it in the hypothesis of Lemma \ref{Lemma:Eilertsen}.


\begin{lemma}\label{Lemma:Eilertsen}\cite[Thm. 16]{EilertsenJFA2001}
Let $s, \eta \in \mathbb{R}$ such that $s \ge 0$  and  $-\eta,\, 2s+\eta-n\not\in \mathbb{N}$. Let $f \in \mathcal{S}(\R^n)$ and, moreover, if $\eta \le 0$ then $f$ should also be in $\widehat{\mathcal{L}}$. Then
\begin{equation}\label{ineq:Eilertsen}
	\int_{\R^{n}} f^2(x) |x|^{2\eta - n} dx  \le C \int_{\R^{n}} | (-\Delta)^{s}  f(x)|^2  |x|^{ 4s +2\eta - n} dx.
\end{equation}
Let $j_0$ be the smallest non-negative integer satisfying $(n - 2\eta )( n-4s-2\eta ) \le (n + 2j_0)^2$  when $n\ge 2$ and $j_0=0$ if $n=1$.  Then
	the best constant $C$ is given by
\begin{equation}\label{CNd}
 C_{n}= 2^{-4s} \max_{j=j_0, j_0+1} \left(  \frac{ \Gamma\big( \frac{\eta+j}{2} \big)   \Gamma\big( \frac{n-2s-\eta+j}{2} \big)   }
{ \Gamma\big( \frac{2s+\eta+j}{2} \big)   \Gamma\big( \frac{n-\eta+j}{2} \big)   } \right)^2.
\end{equation}
\end{lemma}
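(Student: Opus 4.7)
The plan is to prove \eqref{ineq:Eilertsen} by separation of variables in spherical harmonics followed by a Mellin--Plancherel argument, which reduces the estimate to a pointwise bound on a ratio of gamma functions whose supremum can be computed explicitly.

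First, decompose $f(x)=f(r\omega)=\sum_{k\ge 0}\sum_{m} f_{k,m}(r)\,Y_{k,m}(\omega)$ in spherical harmonics on $S^{n-1}$, with $Y_{k,m}$ of degree $k$. Since the weights $|x|^{2\eta-n}$ and $|x|^{4s+2\eta-n}$ are radial, orthogonality of the $Y_{k,m}$ on $S^{n-1}$ splits both sides of \eqref{ineq:Eilertsen} as sums over $(k,m)$. Because $(-\Delta)^s$ commutes with rotations, on each sector it reduces to a one-dimensional, $-2s$-homogeneous operator $L_k^s$ acting on the radial profile, namely $(-\Delta)^s[f_{k,m}(r) Y_{k,m}(\omega)]=(L_k^s f_{k,m})(r)\,Y_{k,m}(\omega)$. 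It therefore suffices to prove, for each $k\ge 0$, the weighted one-dimensional estimate
$$
\int_0^\infty |g(r)|^2\,r^{2\eta-1}\,dr \;\le\; C_k \int_0^\infty |(L_k^s g)(r)|^2\,r^{4s+2\eta-1}\,dr,
$$
and then take $C=\sup_{k\ge 0} C_k$.

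Next, I would diagonalize $L_k^s$ via the Mellin transform $\mathcal{M}[g](z)=\int_0^\infty g(r)\,r^{z-1}\,dr$. A standard computation (using \eqref{InverseFracLapSub} and the explicit action of the heat semigroup on $f_{k,m}(r)\,Y_{k,m}(\omega)$, or \eqref{def:fractLap} combined with Funk--Hecke) yields
$$
\mathcal{M}[L_k^s g](z+2s) \;=\; 2^{2s}\,\frac{\Gamma\!\bigl(\tfrac{k+2s+z}{2}\bigr)\,\Gamma\!\bigl(\tfrac{n+k-z}{2}\bigr)}{\Gamma\!\bigl(\tfrac{k+z}{2}\bigr)\,\Gamma\!\bigl(\tfrac{n-2s+k-z}{2}\bigr)}\,\mathcal{M}[g](z).
$$
The Lizorkin hypothesis $f\in\widehat{\mathcal{L}}$ when $\eta\le 0$ supplies the vanishing moments that allow meromorphic continuation of $\mathcal{M}[f_{k,m}](z)$ to the line $\mathrm{Re}\,z=\eta$, while $-\eta,\,2s+\eta-n\notin\mathbb{N}$ guarantees no gamma singularities on the relevant contours.

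Applying the Mellin--Plancherel identity to both sides of the one-dimensional inequality, with $\mathrm{Re}\,z=\eta$ on the left and $\mathrm{Re}\,z=2s+\eta$ on the right, the estimate becomes
$$
\int_\R |\mathcal{M}[g](\eta+i\tau)|^2\,d\tau \;\le\; C_k \int_\R |\Phi_k(\tau)|^2\,|\mathcal{M}[g](\eta+i\tau)|^2\,d\tau,
$$
where $\Phi_k(\tau)$ denotes the gamma ratio in the previous display evaluated at $z=\eta+i\tau$. Thus the sharp constant in the $k$-th sector is $C_k=\sup_{\tau\in\R}|\Phi_k(\tau)|^{-2}$. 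A standard monotonicity argument for $\tau\mapsto|\Gamma(a+i\tau)/\Gamma(b+i\tau)|$ along vertical lines (from the Weierstrass product for $\Gamma$) shows this supremum is attained at $\tau=0$, reducing $C_k$ to a ratio of real gamma values. A discrete analysis then shows $k\mapsto|\Phi_k(0)|^{-2}$ first increases and then decreases, the turn occurring precisely when $(n-2\eta)(n-4s-2\eta)\le(n+2k)^2$, so the overall supremum is attained at $k=j_0$ or $k=j_0+1$ and yields \eqref{CNd}. The case $n=1$ reduces to $k\in\{0,1\}$ via the even/odd decomposition on $\R$.

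The main obstacle I expect is this last step: showing $|\Phi_k(\tau)|^2$ is minimized at $\tau=0$ and then identifying the critical integer $j_0$ where the $k$-monotonicity reverses. Both require careful manipulation of products of gamma functions (digamma asymptotics, log-convexity of $\log|\Gamma|$ along vertical lines), and the arithmetic inequality $(n-2\eta)(n-4s-2\eta)\le(n+2j_0)^2$ is the delicate step that singles out the sharp constant in \eqref{CNd}.
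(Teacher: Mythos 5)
This lemma is not proved in the paper at all: it is imported verbatim from Eilertsen \cite[Thm.~16]{EilertsenJFA2001}, so there is no in-paper argument to compare yours against. Your outline does follow the standard route to such sharp weighted inequalities (and, to the best of my knowledge, essentially the route of the cited source): decompose into spherical harmonics, use that $(-\Delta)^s$ acts on the sector of degree $k$ as a homogeneous radial operator with Mellin symbol
$2^{2s}\,\Gamma\bigl(\tfrac{k+2s+z}{2}\bigr)\Gamma\bigl(\tfrac{n+k-z}{2}\bigr)\big/\bigl(\Gamma\bigl(\tfrac{k+z}{2}\bigr)\Gamma\bigl(\tfrac{n-2s+k-z}{2}\bigr)\bigr)$,
apply Mellin--Plancherel on the lines $\mathrm{Re}\,z=\eta$ and $\mathrm{Re}\,z=2s+\eta$, and reduce the sharp constant to a supremum of a ratio of gamma functions over vertical lines and over $k$. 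The decomposition, the multiplier formula, and the Plancherel bookkeeping (including why the weights $|x|^{2\eta-n}$ and $|x|^{4s+2\eta-n}$ select exactly those two lines, and the role of the Lizorkin condition for $\eta\le 0$) are all correct as set up.

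The gap is in the last step, and it is not merely a matter of ``careful manipulation.'' Your argument asserts that for \emph{every} $k$ the supremum of $|\Phi_k(\eta+i\tau)|^{-2}$ over $\tau$ is attained at $\tau=0$, and then locates $j_0$ as the turning point of the resulting sequence $k\mapsto|\Phi_k(0)|^{-2}$. That is not how the condition $(n-2\eta)(n-4s-2\eta)\le(n+2j)^2$ enters. Writing $a=\tfrac{\eta+j}{2}$, $b=\tfrac{n-2s-\eta+j}{2}$, $a'=a+s$, $b'=b+s$, that inequality is algebraically equivalent to $ab+a'b'\ge 0$ and involves only the parameters of the single sector $j$: it is the criterion for the vertical-line supremum \emph{within sector $j$} to sit at $\tau=0$. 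The log-convexity argument you invoke (via $|\Gamma(x+iy)|^2=\Gamma(x)^2\prod_m(1+y^2/(x+m)^2)^{-1}$) gives the maximum at $\tau=0$ only when the relevant real parts $a,b$ are positive; for $\eta$ negative enough (precisely the regime where $j_0>0$) the low sectors $j<j_0$ have their $\tau$-supremum elsewhere, and one must separately show that those sectors do not produce a constant larger than $\max_{j=j_0,j_0+1}$. As written, your scheme would return the value of the gamma ratio at $\tau=0$ in every sector and could therefore miss the true supremum whenever $j_0>0$. This single-sector $\tau$-analysis, together with the inter-sector comparison that ultimately leaves only the two indices $j_0$ and $j_0+1$ in competition, is the actual content of Eilertsen's theorem; you have correctly identified it as the hard part, but the mechanism you propose for it needs to be repaired before the sketch becomes a proof.
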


The inequality above  is related to the Stein–Weiss inequality \cite[Thm.B*]{Stein-Weiss}, but in their paper the constant is not computed explicitly, which for us is important.

The following inequality  has been proved by De Nitti and Djitte  and can be seen as a generalization of \eqref{ineq:Eilertsen} to $L^p$ norms and $C^{1,1}_c$ functions.

\begin{lemma}\label{Lemma:deNitti}(\cite[Remark 2.5, item 2]{DeNitti-FracIneq-2024})
Let $s\in (0,1)$,   $\theta>-2s$, $1<p<+\infty$. Then
 \begin{equation*}
   \left[   \frac{b_{n,s,\theta}}{p}\right]^p \int_{\R^n} \frac{|f|^p}{|x|^{\theta+2s}} dx \le \int_{\R^n}\frac{| (-\Delta)^s f |^p}{|x|^{\theta+2s-2sp}} dx
 \end{equation*}
  for all $f \in C^{1,1}_c(\R^N).$  The constant is given by
\begin{equation*}
  b_{N,s,\theta}=2^{2s}
  \frac{
  \Gamma\left(  \frac{n-\theta}{2} \right) \cdot  \Gamma\left( \frac{2s+\theta}{2}\right)
  }
  {
   \Gamma \left(\frac{n-\theta-2s}{2} \right) \cdot  \Gamma\left(  \frac{\theta}{2}\right)
   }.
\end{equation*}
\end{lemma}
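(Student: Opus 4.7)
The plan is to combine three ingredients: the explicit formula for $(-\Delta)^s|x|^{-\theta}$, the Córdoba-Córdoba pointwise convexity inequality, and Hölder's inequality. First I would establish the classical distributional identity
$$(-\Delta)^s\bigl[\,|x|^{-\theta}\,\bigr] = b_{n,s,\theta}\,|x|^{-\theta-2s},$$
which follows from the Fourier transform of Riesz potentials $\widehat{|x|^{-\theta}}(\xi)=\gamma_\theta |\xi|^{-(n-\theta)}$ combined with the multiplier characterization of $(-\Delta)^s$; the ratio $\gamma_\theta/\gamma_{\theta+2s}$ then collapses to the explicit constant $b_{n,s,\theta}$ via a Beta-function identity. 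The assumption $\theta>-2s$ keeps the Gamma arguments in the constant compatible, and an analytic continuation argument extends the identity beyond the classical range $0<\theta<n-2s$.

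For $f\in C^{1,1}_c(\R^n)$, I would then pair this identity distributionally against $|f|^p$ and move $(-\Delta)^s$ onto $|f|^p$ by self-adjointness,
$$b_{n,s,\theta}\int_{\R^n}\frac{|f|^p}{|x|^{\theta+2s}}\,dx
 = \int_{\R^n} |f|^p\,(-\Delta)^s\bigl[|x|^{-\theta}\bigr]\,dx
 = \int_{\R^n} |x|^{-\theta}\,(-\Delta)^s[|f|^p]\,dx,$$
and apply the Córdoba-Córdoba convexity inequality $(-\Delta)^s\Phi(u)\le \Phi'(u)\,(-\Delta)^s u$ (a direct consequence of $\Phi(u(x))-\Phi(u(y))\le \Phi'(u(x))(u(x)-u(y))$ inserted into \eqref{def:fractLap}) with $\Phi(t)=|t|^p$. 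Integrating against the positive weight $|x|^{-\theta}$ and bounding $|f|^{p-2}f\cdot (-\Delta)^s f\le |f|^{p-1}|(-\Delta)^s f|$ yields
$$b_{n,s,\theta}\int_{\R^n}\frac{|f|^p}{|x|^{\theta+2s}}\,dx\ \le\ p\int_{\R^n} \frac{|f|^{p-1}\,|(-\Delta)^s f|}{|x|^{\theta}}\,dx.$$

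To finish, I would factor the integrand on the right as $\bigl(|f|^{p-1}|x|^{-(\theta+2s)(p-1)/p}\bigr)\cdot \bigl(|(-\Delta)^s f|\,|x|^{-(\theta+2s-2sp)/p}\bigr)$, with exponents chosen to sum to $-\theta$, and apply Hölder with conjugate exponents $p/(p-1)$ and $p$. Dividing through by the resulting factor $\bigl(\int |f|^p/|x|^{\theta+2s}\,dx\bigr)^{(p-1)/p}$ and raising to the $p$-th power produces exactly $(b_{n,s,\theta}/p)^p\int |f|^p/|x|^{\theta+2s}\,dx \le \int |(-\Delta)^s f|^p/|x|^{\theta+2s-2sp}\,dx$.

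The main technical obstacle is justifying the Córdoba-Córdoba step when $1<p<2$, because $|f|^p$ then fails to be $C^{1,1}$ at zeros of $f$ and its pointwise fractional Laplacian is no longer defined by the absolutely convergent singular integral \eqref{def:fractLap}. The standard remedy is to regularize $|f|^p$ by $(\varepsilon+f^2)^{p/2}-\varepsilon^{p/2}$, which is smooth and compactly supported for $\varepsilon>0$, apply the argument to the regularization, and pass to $\varepsilon\to 0^+$ by dominated convergence using the uniform bounds furnished by $f\in C^{1,1}_c$. A secondary issue is the legitimacy of the distributional pairing in the second paragraph when the support of $f$ meets the origin, which reduces to the local integrability of $|x|^{-\theta}$ and $|x|^{-\theta-2s}$ together with the pointwise smoothness of $(-\Delta)^s[|f|^p]$ at $0$ inherited from the $C^{1,1}_c$ regularity of $f$.
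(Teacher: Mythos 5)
The paper does not actually prove this lemma: it is imported verbatim from De Nitti--Djitte \cite[Remark 2.5, item 2]{DeNitti-FracIneq-2024}, so there is no in-paper argument to compare against. Your proposal is a correct reconstruction of essentially the proof in that reference (their ``integration by parts'' is exactly your ground-state substitution): the pointwise identity $(-\Delta)^s|x|^{-\theta}=b_{n,s,\theta}|x|^{-\theta-2s}$ with precisely the stated constant, self-adjoint pairing against $|f|^p$, the C\'ordoba--C\'ordoba convexity inequality for $\Phi(t)=|t|^p$, and H\"older with the weight split $|x|^{-\theta}=|x|^{-(\theta+2s)(p-1)/p}\cdot|x|^{-(\theta+2s-2sp)/p}$ (your exponents do sum to $-\theta$, and the division-and-raise-to-the-$p$ step is arithmetic). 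Your identification of the two technical obstacles --- the failure of $|f|^p$ to be $C^{1,1}$ at zeros of $f$ when $1<p<2$, cured by the regularization $(\varepsilon+f^2)^{p/2}-\varepsilon^{p/2}$, and the legitimacy of the pairing near the origin --- is also the right list, and the proposed fixes work ($\Phi_\varepsilon''>0$ for $p>1$, $|\Phi_\varepsilon'(f)|\le p|f|^{p-1}$ uniformly in $\varepsilon$, so dominated convergence applies).

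Two points you should still pin down to make the argument airtight. First, the H\"older/division step requires knowing a priori that $\int|f|^p|x|^{-\theta-2s}\,dx<\infty$ and that $b_{n,s,\theta}>0$; both hold only in the range $0<\theta<n-2s$ (local integrability of the weight and positivity of all four Gamma arguments), which is narrower than the stated hypothesis $\theta>-2s$ --- outside that range the inequality is vacuous or the constant is nonpositive, so you should restrict to the nontrivial regime explicitly rather than invoke analytic continuation. Second, the self-adjointness identity $\int g\,(-\Delta)^s h=\int h\,(-\Delta)^s g$ with $h=|x|^{-\theta}$ possibly growing (for $\theta<0$) is best justified by writing both sides as the symmetric Gagliardo double integral and checking its absolute convergence using the decay $(-\Delta)^s[\Phi_\varepsilon(f)](x)=O(|x|^{-n-2s})$; this is routine but is where the hypothesis $\theta>-2s$ actually enters.
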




\subsection{The fractional power of the backward heat operator.}  For $u:\R^d\times \R  \to \R$,
$(-\Delta-\partial_t)^s u $ can be represented via several equivalent definitions (see \cite{StingaTorreaMastersEq,BanerjeeGarofalo}):
\begin{enumerate}
\item[(a)] Via the Fourier transform:
\begin{equation}
	\mathcal{F}_{x,t}((-\Delta-\partial_t)^s u(t,x)) (\tau,\xi)=(|\xi|^2-i \tau)^s \widehat{h}(\tau,x), \quad \xi \in \R^d, \tau\in \R.
\end{equation}
\item[(b)] The subordination formula, also known Bochner's subordination formula or Balakrishnan formula (see [p.260, formula (5)]\cite{Yosida},  [p.70 formula (6.9)]\cite{Pazy}):
\begin{equation}\label{def:FracHeatOperator}
(-\Delta-\partial_t)^s u(t,x) =\frac{1}{\Gamma(-s)}\int_0^\infty \left(e^{\tau (-\Delta-\partial_t) } u(t,x)- u(t,x)\right) \frac{d\tau}{\tau^{1+s}}, \quad \text{for } \, s\in (0,1),
\end{equation}
\begin{equation}\label{def:InverseFracHeatOperator}
(-\Delta-\partial_t)^{-s} u(t,x) =\frac{1}{\Gamma(s)}\int_0^\infty e^{\tau (-\Delta-\partial_t) } u(t,x) \frac{d\tau}{\tau^{1-s}}\quad \text{for } \, s\in (0,d).
\end{equation}
Here $U=e^{\tau (-\Delta-\partial_t) } u(t,x)$ is the solution to the problem
\begin{equation}\label{eq2:U-RN}
  \left\{ \begin{array}{ll}
  \frac{d}{d\tau}U(x,t,\tau)=\Delta_{x} U(x,t,\tau) +  U_t(x,t,\tau) &\text{for }\tau>0,\, t \in  \R \text{ and } x \in \R^d,  \\[2mm]
  U(x,t,0)=u(x,t) &\text{for } x \in \R^d, \, t \in  \R .
    \end{array}
    \right.
\end{equation}
\end{enumerate}

When $u(t,x)$ is not dependent of $t$ then we recover $(-\Delta)^s u(x)$. When $u$ does not depend on $x$ we recover the fractional derivative in time  called Marchaud.

\begin{defn}\label{defn:FractBackwardHeat}
For functions $u:\R^d\times [0,\infty)$ we consider $(-\Delta-\partial_t)^s u$ defined by \eqref{def:FracHeatOperator}, respectively \eqref{def:InverseFracHeatOperator}, where $V(x,t,\tau):= e^{\tau (-\Delta-\partial_t) } u(t,x)$ is the solution to the initial boundary value  problem
\begin{equation}\label{eq2:V-bis}
  \left\{ \begin{array}{ll}
  \frac{d}{d\tau}V(x,t,\tau)=\Delta_{x} V(x,t,\tau) +  V_t(x,t,\tau) &\text{for }\tau>0,\, t >0 \text{ and } x \in \R^d,  \\[2mm]
  V(x,0,\tau)= \frac{     1    }{ (4 \pi \tau)^{d/2}  }  \int_{\R^d}  e^{ - \frac{|x-\overline{x}|^2 }{4 \tau} }  u(\overline{x}, \tau ) \, d \overline{x}  &\text{for } x \in \R^d, \, \tau>0, \\[2mm]
  V(x,t,0)=u(x,t) &\text{for } x \in \R^d, \, t\ge0.
    \end{array}
    \right.
\end{equation}
\end{defn}
In Lemma \ref{Lemma:V} we prove the solution of \eqref{eq2:V-bis} is unique (as a bounded solution) and coincides with the solution of \eqref{eq2:U-RN} on $\R^d \times [0,\infty) \times [0,\infty)$, thus definition \eqref{defn:FractBackwardHeat} coincides with \eqref{def:FracHeatOperator} for functions $u$ that are globally defined.

\section{Transformation}

\begin{proposition}\label{prop:transf} Let $s \in (-d,1)$.  Let $u : \R^d \times [0,\infty)$ such that $\tilde{u} \in \mathcal{S}(\R^{d+1})$,  where $\tilde{u}$  is defined in $\eqref{even-extension}.$
   \begin{itemize}
     \item[(i)]Then   $(-\Delta_{x,y})^s \left[ u\big(x,\frac{|y|^2}{2N}\big)  \right]$ is a function only of $\big(x,\frac{|y|^2}{2N}\big)$, for every $N\in  \mathbb{N}$, $N\ge 1$. \\[1mm]
  \item[(ii)]  Let   $G_N:\R^d \times [0,\infty) \times (0,\infty) \to \R$  such that
  $$G_N\big(x, \frac{|y|^2}{2N}  \big)= (-\Delta_{x,y})^s \left[ u\big(x,\frac{|y|^2}{2N}\big)  \right] .$$
  \noindent Then
  $$G_{N} (x,t) \rightarrow  ( -\Delta_x-\partial_t)^{s} [u(x,t)] \quad \text{as }N\to \infty. $$
   \end{itemize}
  \end{proposition}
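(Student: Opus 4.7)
My plan is to treat part (i) as an immediate consequence of Lemma \ref{Lemma:radial}, and to establish part (ii) by representing both operators through their subordination formulas and matching the heat-semigroup factors in the limit $N\to\infty$. For part (i): since $(x,y)\mapsto u(x,|y|^2/(2N))$ is a function of $x$ and $|y|$ alone, Lemma \ref{Lemma:radial} guarantees that $(-\Delta_{x,y})^s[u(x,|y|^2/(2N))]$ is again radial in $y$, hence a function of $x$ and $|y|$, equivalently of $x$ and $|y|^2/(2N)$.

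For part (ii) I would apply \eqref{FracLapSub} to $(-\Delta_{x,y})^s$ in $\R^{d+N}$ and Definition \ref{defn:FractBackwardHeat} together with \eqref{def:FracHeatOperator} to $(-\Delta_x-\partial_t)^s$. Writing $V_N(x,y,\tau)$ for the solution of the heat equation in $\R^{d+N}$ with initial datum $u(x,|y|^2/(2N))$, and $V(x,t,\tau)$ for the solution of the initial-boundary value problem \eqref{eq2:V-bis}, the claim reduces to the convergence
\[
\int_0^\infty \bigl[V_N(x,y,\tau)-u(x,|y|^2/(2N))\bigr]\,\frac{d\tau}{\tau^{1+s}}\bigg|_{|y|=\sqrt{2Nt}}\;\longrightarrow\;\int_0^\infty\bigl[V(x,t,\tau)-u(x,t)\bigr]\,\frac{d\tau}{\tau^{1+s}}.
\]
I would split this into pointwise convergence of the integrand (for each fixed $\tau$) and a dominated convergence argument to exchange limit and integral.

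For the pointwise convergence, I would write $V_N$ via its Gaussian kernel and use the radial symmetry of (i) to take $y=\sqrt{2Nt}\,e_1$. After the change of variables $y'=y+\sqrt{2\tau}\,w$ with $w$ distributed as a standard Gaussian vector in $\R^N$, the argument of $u$ becomes $t+2\sqrt{t\tau/N}\,w_1+\tau|w|^2/N$. The law of large numbers yields $|w|^2/N\to 1$ and $w_1/\sqrt{N}\to 0$ almost surely, so this argument converges a.s.\ to $t+\tau$, and bounded convergence (using that $\tilde u$ is bounded) gives
\[
V_N(x,\sqrt{2Nt}\,e_1,\tau)\;\longrightarrow\;\frac{1}{(4\pi\tau)^{d/2}}\int_{\R^d}e^{-|x-\bar x|^2/(4\tau)}\,u(\bar x,t+\tau)\,d\bar x.
\]
A direct differentiation verifies that this explicit expression is a bounded solution of \eqref{eq2:V-bis}, and by the uniqueness statement of Lemma \ref{Lemma:V} it equals $V(x,t,\tau)$.

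The domination step requires controlling the integrand uniformly in $N$ both near $\tau=0$ and at infinity. Near zero, a Taylor expansion combined with the identity $\Delta_{x,y}[u(x,|y|^2/(2N))]=\Delta_x u+u_t+(2t/N)u_{tt}$ evaluated at $t=|y|^2/(2N)$ gives $V_N-u=O(\tau)$ uniformly in $N$, so the subordination integrand is $O(\tau^{-s})$ and integrable near $0$; for large $\tau$ the crude bound $|V_N-u|\le 2\|\tilde u\|_\infty$ together with $s>0$ closes the argument. The main obstacle is precisely this uniform-in-$N$ domination: one must simultaneously absorb the extra term $(2t/N)u_{tt}$ produced by the radial Laplacian and tame the singular weight $\tau^{-1-s}$, which is exactly where the Schwartz hypothesis on $\tilde u$ enters essentially. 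The case $s\in(-d,0)$ is handled by the same scheme after replacing \eqref{FracLapSub} and \eqref{def:FracHeatOperator} by their Riesz-type analogues \eqref{InverseFracLapSub} and \eqref{def:InverseFracHeatOperator}; the subordination integral then has no subtraction, but the decay of $V_N$ in $\tau$ (coming from $\tilde u\in\mathcal S$) makes the argument go through verbatim.
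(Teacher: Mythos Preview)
Your proposal is correct and takes a genuinely different route from the paper for part (ii). The paper proceeds by writing the heat semigroup $U_N(x,y,\tau)$ explicitly via a Bessel-function integral (formula \eqref{radial:U-N}), then deriving the perturbed PDE \eqref{eq:VN} for the radial profile $V_N(x,t,\tau)$, and finally identifying the limit through a subsequence argument: for a fixed point, any convergent subsequence must satisfy the limiting problem \eqref{eq:V}, whose unique bounded solution is given by Lemma \ref{Lemma:V}. Along the way the paper needs a separate asymptotic computation (via incomplete Gamma functions) of the lateral boundary value $V_N(x,0,\tau)$.

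Your argument bypasses all of this: the Gaussian change of variables $y'=y+\sqrt{2\tau}\,w$ rewrites the $y$-integral as an expectation over a standard normal vector in $\R^N$, and the law of large numbers $|w|^2/N\to1$ a.s.\ immediately produces the limit $u(\bar x,t+\tau)$ inside the $x$-heat kernel---exactly the formula in Lemma \ref{Lemma:V}. This is more direct and arguably more robust: it avoids the Bessel representation entirely and sidesteps the passage to the limit in the PDE (which in the paper is carried out only for a pointwise-convergent subsequence and would need further justification to conclude that derivatives also converge). What the paper's route buys is an explicit closed formula \eqref{def:VN} for $V_N$ at every finite $N$, together with a transparent view of how the radial Laplacian degenerates to the transport term $\partial_t$; your probabilistic picture delivers the same limit without ever naming $V_N$ explicitly. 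Your uniform domination near $\tau=0$ via $\|\Delta_{x,y}f\|_\infty\le\|\Delta_x u\|_\infty+\|u_t\|_\infty+2\|t\,u_{tt}\|_\infty$ is also more explicit than the paper's one-line appeal to dominated convergence.
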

\begin{proof}
We define the function  $f:\R^d \times \RN \to \R$  by
$$f(x,y)=u\big(x,\frac{|y|^2}{2N}\big) ,$$
and, thus,  $f$ is radial with respect to the $y$ coordinate.
The idea of this definition, as well as the subsequent limit process, is inspired by the paper of Davey \cite{DaveyARMA2018} and by Tao's blog \cite{Tao}. We present the proof for $s\in (0,1)$; however for $s\in (-d,0)$ similar arguments apply using the corresponding definitions \eqref{InverseFracLapSub} and \eqref{def:InverseFracHeatOperator} for $(-\Delta_{x,y})^{-s}, ( -\Delta_x-\partial_t)^{-s}$.  The case $s=0$ is straightforward and is related to \cite{DaveyARMA2018}.

\noindent\emph{Proof of (i). The fractional Laplacian. }
We observe first that $(-\Delta_{x,y})^s f(x,y)$ is also radial with respect to the $|y|$ coordinate as we proved in Lemma \ref{Lemma:radial}, thus $(-\Delta_{x,y})^s f(x,y)=g(x,|y|)$ for all $y\in \R^N$, where $g$ is some function $\R^d \times [0,\infty)\to \R$ . Since $\varphi:[0,\infty) \to [0,\infty)$,  $\varphi(r)=r^2/(2N)$ is a bijection then $g(x,|y|)=
g(x,\varphi^{-1}(\varphi(y))$, and thus,
by defining
$$G_N:\R^d \times [0,\infty)\to \R,\quad G_N(x,t)=g(x,\sqrt{2N t}),$$
we have
\begin{equation}\label{FractLap:f}
(-\Delta_{x,y})^s f(x,y) =   G_N(x,  \frac{|y|^2}{2N}).
\end{equation}
Later in this proof, we will alternatively deduce this dependence  using the subordination definition.

Our goal is to prove that $G_N(x, t) \rightarrow (-\partial_t-\Delta_x)^s [u(x,t)]$ as $N\to \infty$ pointwise for every $(x,t) \in \R^d\times [0,\infty).$  We will approach this by using the semigroup definition of fractional powers of operators.
  By using definition \eqref{FracLapSub},
$$
(-\Delta_{x,y})^s f(x,y)=\frac{1}{\Gamma(-s)} \int_0^ \infty ( U_N(x,y,\tau) - f(x,y)) \frac{d\tau}{\tau^ {1+s}},
$$
where $U_N(x,y,\tau)$ solves the heat equation
\begin{equation} \label{eq:U}
  \left\{ \begin{array}{ll}
  \frac{d}{d\tau} U_N(x,y,\tau)=\Delta_{x,y} U(x,y,\tau) &\text{for }\tau>0 \text{ and } x \in \R^d, \, y \in \R^N, \\[2mm]
  U_N(x,y,0)  =f(x,y)=u(x, \frac{|y|^2}{2N} ) &\text{for } x \in \R^d, \, y \in \RN.
    \end{array}
    \right.
\end{equation}
 Recall that  $u$ is a bounded function since $u\in \mathcal{S}$, thus there exists $M>0$ such that $|u(x,t)| \le M,$ for all $  x\in \R^d, t\ge0.$  The Maximum Principle for the Heat Equation  (see \cite[p.57]{Evans}) 
gives us that
$$ \inf_{\R^d \times  \R^N} f(x,y)  \le \inf_{\R^d \times  \R^N}  U_N(x,y,t) \le  U_N(x,y,t)\le \sup_{\R^d \times  \R^N} U_N(x,y,t) \le \sup_{\R^d \times  \R^N} f(x,y).$$
Since $f(x,y)=u(x, \frac{|y|^2}{2N} )$ and $u$ is bounded, then we obtain that  $|U_N|$ is bounded on $\R^d \times \R^N$ with an upper bound independent of $N$.  \normalcolor

Since the data depends only on its values for $x$ and $\frac{|y|^2}{2N}$, then, it can be proved that also $U_N$ can be written as function of $(x, \frac{|y|^2}{2N},\tau )$.  This can be easy justified by the fact that  $U_N$ is radial with respect to $y$ and noticing that $ r \mapsto \frac{r^2}{2N}$ is a bijection from $[0, + \infty) \to [0, + \infty)$.  We can also give an explicit representation formula for this fact.
Indeed, using the integral representation  \eqref{def:fractLap}, we have:
\begin{align*}
  U_N(x,y,\tau) & = \frac{1}{(4 \pi \tau)^{(N+d)/2}}  \int_{\R^d} \int_{\R^N} e^{ - \frac{|x-\overline{x}|^2 + |y-\overline{y}|^2}{4 \tau} } u\big(\overline{x}, \frac{|\overline{y}|^2}{2N} \big) \, d \overline{y}\, d \overline{x}.
  \end{align*}
  We pass to polar coordinates $\overline{y}=r \sigma$ with $r\in [0,\infty)$, $\sigma \in S^{N-1}$ the unit sphere in $\mathbb{R}^N$ with area recalled in \eqref{area:sphere}. Thus
    \begin{align*}
  U_N(x,y,\tau)
   &= \frac{1}{(4 \pi \tau)^{(N+d)/2}}  e^{ - \frac{ |y|^2 }{4 \tau} }   \int_{\R^d}  \int_0^\infty
   e^{ - \frac{|x-\overline{x}|^2 }{4 \tau} } e^{ - \frac{  r^2}{4 \tau} }  r^{N-1 }u\big(\overline{x}, \frac{r^2}{2N} \big)
   \int_{S^{N-1}} e^{  \frac{ r  y \cdot \sigma}{2 \tau} } \, d\sigma  \, d r\, d \overline{x}.
\end{align*}
In order to compute the integral over the sphere for fixed $y$, a rotation argument helps to pass from $y\cdot \sigma$ to $|y| \vec{e}_1 \cdot \sigma  = |y| \sigma_1$, where $\sigma_1$ is the first component of $\sigma$ as a vector in $\R^N$. This is explained later on in the Appendix, see \eqref{Integral:Rotation}.
Thus
\begin{align*}
  \int_{S^{N-1}}   e^{  \frac{ r  y \cdot \sigma}{2 \tau} }  d\sigma&= \int_{S^{N-1}}   e^{  \frac{ r  |y| \sigma_1}{2 \tau} }  d\sigma =  |S^{N-2}| \int_{-1}^1   e^{  \frac{ r  |y| t }{2 \tau} } (1-t^2)^{\frac{N-2}{2}} \, d t  \\[2mm]
  &= \frac{\Gamma(\frac{N}{2} )}{\Gamma(\frac{N-1}{2})}  \cdot \frac{ 2^N \pi^{\frac{N}{2} }  \tau^{\frac{N-1}{2}}  }{(r  |y|   ) ^{\frac{N-1}{2}}} \cdot  I_{\frac{N-1}{2}} \Big(\frac{ r  |y|  }{2 \tau} \Big),
\end{align*}
where $I$ is the modified Bessel function of first kind that we recall in \eqref{BesselI}.
Then
\begin{align}\label{radial:U-N}
  U_N(x,y,\tau) & = \frac{1}{(4 \pi \tau)^{(N+d)/2}}
  \frac{\Gamma(\frac{N}{2} )}{\Gamma(\frac{N-1}{2})}   2^N \pi^{\frac{N}{2} }
    \tau^{\frac{N-1}{2}}  e^{ - \frac{ |y|^2 }{4 \tau} }  \\ \nonumber
  &\quad \cdot \int_{\R^d}  \int_0^\infty
   e^{ - \frac{|x-\overline{x}|^2 }{4 \tau} } e^{ - \frac{  r^2}{4 \tau} }  r^{N-1 }u(\overline{x}, \frac{r^2}{2N} ) \frac{ 1 }{(r  |y|   ) ^{\frac{N-1}{2}}}  I_{\frac{N-1}{2}}\Big( \frac{r  |y|  }{2 \tau} \Big) \,d r d \, \overline{x}.
\end{align}
If we denote $\frac{|y|^2 }{2N}=t$  and change variables $\frac{r^2}{2N}=p$  then $r dr=N dp$:
\begin{align*}
 & U_N(x,y,\tau)
   =\frac{1}{ 2^d     \pi^{\frac{d}{2} } \tau^{(d+1)/2}}    \frac{\Gamma(\frac{N}{2} )}{\Gamma(\frac{N-1}{2})}   N^{\frac{1}{2}} \cdot  \\
   & \qquad \int_{\R^d}e^{ - \frac{|x-\overline{x}|^2 }{4 \tau} }  \int_0^\infty u(\overline{x}, p )
      e^{ - \frac{  N }{2 \tau} (t+p)}    p^{\frac{N-2}{2} }  (pt)   ^{ - \frac{N-1}{4}}  I_{\frac{N-1}{2}}\Big(\frac{ N  \sqrt{pt}  }{ \tau} \Big) \, d p\, d \overline{x}.
\end{align*}
Thus $$ U_N(x,y,\tau) = V_N(x,\frac{|y|^2 }{2N},\tau), \quad \forall x  \in \R^d, \, y\in \R^N,\, \tau>0,$$
where $V_N:\R^d \times [0,\infty) \times [0,\infty)\to \R$ is defined by
\begin{multline}
  \label{def:VN}
 V_N(x,t,\tau) = \frac{1}{ 2^d     \pi^{\frac{d}{2} } \tau^{(d+1)/2}}    \frac{\Gamma(\frac{N}{2} )}{\Gamma(\frac{N-1}{2})}   N^{\frac{1}{2}} \\
 \cdot \int_{\R^d}e^{ - \frac{|x-\overline{x}|^2 }{4 \tau} }  \int_0^\infty u(\overline{x}, p )
      e^{ - \frac{  N }{2 \tau} (t+p)}    p^{\frac{N-2}{2} }  (pt)   ^{ - \frac{N-1}{4}}  I_{\frac{N-1}{2}}\Big(\frac{ N  \sqrt{pt}  }{ \tau} \Big)\, d p \,d \overline{x}.
\end{multline}
%
Equivalently,
$$
V_N(x,t,\tau)=U(x,\sqrt{2N  t}\, z, \tau)$$ for every $z\in \RN$ with modulus $1$.


\medskip

\noindent\emph{(ii). Passing to the limit.}  We use  the definition \eqref{def:VN} and the heat equation \eqref{eq:U} solved by $U_N$.
Let $\varphi:[0,\infty) \to [0,\infty)$,  $\varphi(r)=r^2/(2N)$, and thus
$$U_N(x,y,\tau) =V_N(x,\varphi(r),\tau), \quad  r=|y|.$$
By passing $\Delta_y$ to radial coordinates, it follows that $V_N(x,\varphi(r),\tau)$ solves the equation
\begin{align*}
\frac{d}{d\tau}V_N(x,\varphi(r),\tau)&=\Delta_{x} V_N(x,\varphi(r),\tau) +  (V_N)_t(x,\varphi(r),\tau) \cdot  \left(\varphi''(r) + \frac{N-1}{r} \varphi'(r)\right) \\
&\qquad+ (V_N)_{tt} (\varphi(r),\tau) \cdot (\varphi'(r))^2 ,
\end{align*}
where $(V_N)_t$ is the derivative of $V_N(x,t,\tau)$ with respect to the $t$ variable.
Since  $\varphi''(r) + \frac{N-1}{r} \varphi'(r)=1$, then  $V_N$ is a solution to the equation
\begin{align*}
\frac{d}{d\tau}V\big(x,\frac{r^2}{2N},\tau\big)&=\Delta_{x} V\big(x,\frac{r^2}{2N},\tau\big) +  (V_N)_t\big(x,\frac{r^2}{2N},\tau\big) + V_{tt} \big(x,\frac{r^2}{2N},\tau\big) \cdot \left(\frac{r}{N}\right)^2 .
\end{align*}
Let $r^2/(2N)=t$. Then,
\begin{equation}\label{eq:VN}
\frac{d}{d\tau}V_N(x,t,\tau)=\Delta_{x} V(x,t,\tau) +  (V_N)_t(x,t,\tau) + (V_N)_{tt} (x,t,\tau) \cdot \frac{t}{N},
\end{equation}
for $x\in \R^d$, $t\ge 0$, $\tau>0.$


Our purpose is to prove that $ V_N(x,t,\tau) $ is pointwise convergent as $N\to \infty$  for every $x\in \R^d$, $t\ge 0$, $\tau>0$.
We will use the following  property from calculus: a bounded sequence converges to $l$ if and only if any convergent subsequence converges to $l$.

We fix $(x,t,\tau) \in \R^d \times [0,\infty) \times (0,\infty).$  Let $(V_{N_k}(x,t,\tau) )_k$ be a convergent subsequence of $(V_N(x,t,\tau) )_N$ and we denote by $V(x,t,\tau)=\lim_{k\to \infty}V_{N_k} (x,t,\tau)$.
 We pass to the limit as $N_k\to \infty$ in the PDE solved by $V_{N_k}$ in order to obtain a PDE for $V$.

The lateral boundary value for $V_N$ is
\begin{align*}
  V_N(x,0,\tau) & =U_N(x,\vec{0},\tau)   =
  \frac{1}{(4 \pi \tau)^{(N+d)/2}}  \int_{\R^d} \int_{\R^N} e^{ - \frac{|x-\overline{x}|^2 + |\overline{y}|^2}{4 \tau} } u(\overline{x}, \frac{|\overline{y}|^2}{2N} ) d \overline{y} d \overline{x} \\
  &=
   \frac{ |S^{N-1}|}{(4 \pi \tau)^{(N+d)/2}}    \int_{\R^d}  \int_0^\infty
   e^{ - \frac{|x-\overline{x}|^2 }{4 \tau} } e^{ - \frac{  r^2}{4 \tau} }  r^{N-1 }u(\overline{x}, \frac{r^2}{2N} )
    d r d \overline{x} \\
   &= \frac{ |S^{N-1}|}{(4 \pi \tau)^{(N+d)/2}}      \int_{\R^d}  \int_0^\infty
   e^{ - \frac{|x-\overline{x}|^2 }{4 \tau} } e^{ - \frac{ N  t}{2 \tau} }  (2Nt)^{\frac{N-2}{2}  }  u(\overline{x}, t )
 N \, dt \, d \overline{x} \\
  &=   \frac{     1    }{ (4 \pi \tau)^{d/2}  } \cdot \frac{1}{ \Gamma(N/2)}
  \left(  \frac{N}{2\tau} \right)^{N/2} \cdot
  \int_{\R^d}  e^{ - \frac{|x-\overline{x}|^2 }{4 \tau} } \int_0^\infty
   e^{ - \frac{ N  t}{2 \tau} }  t^{\frac{N-2}{2}  }  u(\overline{x}, t )
  \, dt \, d \overline{x}  \\
  &=:g_N(x,\tau).
\end{align*}
Notice that
\begin{align}\label{integral-R}
   \int_0^\infty
   e^{ - \frac{ N  t}{2 \tau} }  t^{\frac{N-2}{2}  }   \, dt  &= \left( \frac{2\tau}{N} \right)^{\frac{N}{2}}   \Gamma(N/2)
\end{align}
and
$$\int_{\R^d}  e^{ - \frac{|x-\overline{x}|^2 }{4 \tau} } \, d \overline{x} =  (4 \pi \tau)^{d/2} .$$
Since $u$ is bounded, then the previous computations lead to
\begin{align*}
 | g_N(x,\tau)| & \le  \|u\|_{\infty}.
\end{align*}
We have:
\begin{align*}
  g_N(x,\tau) & - \frac{     1    }{ (4 \pi \tau)^{d/2}  }  \int_{\R^d}  e^{ - \frac{|x-\overline{x}|^2 }{4 \tau} }  u(\overline{x}, \tau ) \, d \overline{x}   =\frac{     1    }{ (4 \pi \tau)^{d/2}  }  \int_{\R^d}  e^{ - \frac{|x-\overline{x}|^2 }{4 \tau} } \cdot I(N),
\end{align*}
where
\begin{align*}
 | I(N)| & = \frac{1}{ \Gamma(N/2)}
  \left(  \frac{N}{2\tau} \right)^{N/2} \cdot   \int_0^\infty
   e^{ - \frac{ N  t}{2 \tau} }  t^{\frac{N-2}{2}  } \left( u(\overline{x}, t )  -  u(\overline{x}, \tau ) \right) \, dt.
\end{align*}
Changing variables $t=\tau \cdot a$, $dt=\tau da$, gives us
$$I(N)= \frac{1}{ \Gamma(N/2)}
  \left(  \frac{N}{2} \right)^{N/2} \cdot  \int_0^\infty
   e^{ - \frac{ N  }{2 }  a}  a^{\frac{N-2}{2}  } \left( u(\overline{x}, a \tau )  -  u(\overline{x}, \tau ) \right) \, da.
   $$
The function $a\mapsto  h(a):=e^{ - \frac{ N  }{2 }  a}  a^{\frac{N-2}{2}  } $, defined for $a \ge 0$, is increasing in $[0, \frac{N-2}{N}]$ and decreasing on $[\frac{N-2}{N},\infty)$, attaining the maximum in $a=\frac{N-2}{N}$, which is close to $1$ for large $N$. Also $\lim_{a\to \infty} h(a)=0 =h(0).$ Since $u$ is continuous at $t=\tau$, then for fixed $\epsilon>0$, there exists $\delta >0$  such that  $\left| u(\overline{x}, a\tau )  -  u(\overline{x}, \tau )\right|<\epsilon$ for $a \in (1-\delta, 1+\delta).$
This motivates us to split the integral as follows:
$$
I(N)= I_1(N) + I_2(N) + I_3(N) _= \int_{1-\delta}^{1+\delta} + \int_0^{1-\delta} + \int_{1+\delta}^{\infty}.
$$
We estimate from above each of these integrals. For $I_1(N)$ we use the continuity of $u$ and formula \eqref{integral-R}:
\begin{align*}
 | I_1(N)| & =  \frac{1}{ \Gamma(N/2)}
  \left(  \frac{N}{2} \right)^{N/2} \cdot   \int_{1-\delta}^{1+\delta}
   e^{ - \frac{ N  }{2} a }  a^{\frac{N-2}{2}  } \left| u(\overline{x}, a\tau )  -  u(\overline{x}, \tau ) \right| \, da  \le \epsilon.
  \end{align*}
  For $I_2(N)$ and $I_3(N)$ we use \eqref{Gamma:lower} that translates here as $ \Gamma(N/2)  \le  \frac{\sqrt{2\pi}}{\sqrt{N/2}} \left(\frac{N}{2}\right) ^ \frac{N}{2} e^{-\frac{N}{2}}$. Thus
 \begin{align*}
 | I_2(N)| & \le  \|u\|_\infty \frac{1}{\sqrt{2\pi}} \sqrt{N/2} \,  e^{\frac{N}{2}}\cdot \int_0^{1-\delta}   e^{ - \frac{ N  }{2 }  a}  a^{\frac{N-2}{2}  } da.
    \end{align*}
The above integral rewrites as:
$$ \int_0^{1-\delta}    e^{ - \frac{ N  }{2 }  a}  a^{\frac{N-2}{2}  } da = \left(\frac{2}{N} \right)^{\frac{N}{2}} \cdot \int_0^{\frac{N}{2}( 1-\delta)}    e^{ - x}  x^{\frac{N}{2}-1  } dx = \left(\frac{2}{N} \right)^{\frac{N}{2}} \cdot \gamma\left(\frac{N}{2},\frac{N}{2}(1-\delta) \right),$$
where $\gamma (\cdot,\cdot)$ is the incomplete Gamma function recalled in \eqref{Incomplete:gamma}.
Using  the asymptotics for large $N$ given in \eqref{Incomplete:gamma} we obtain
\begin{align*}
 | I_2(N)| &\sim \|u\|_\infty \frac{1}{\sqrt{2\pi}} \sqrt{N/2} \,  e^{\frac{N}{2}}\cdot  \left(\frac{2}{N} \right)^{\frac{N}{2}} \cdot \left( \frac{N}{2}(1-\delta) \right)^{\frac{N}{2}} \cdot e^{- \frac{N}{2}(1-\delta)}\cdot \frac{1}{\frac{N}{2} \delta} \\
 &=  \|u\|_\infty \frac{1}{\sqrt{2\pi}} \, \frac{1}{\delta \sqrt{2}} \cdot  \frac{1}{\sqrt{N}} \cdot e^{ \frac{N}{2}\delta }\cdot  \left( 1-\delta \right)^{\frac{N}{2}}.
    \end{align*}
Notice that
$$ e^{ \frac{N}{2}\delta }\cdot  \left( 1-\delta \right)^{\frac{N}{2}} = e^{ \frac{N}{2} \left( \delta + \log(1-\delta) \right) }. $$
The function $\delta \mapsto \delta + \log(1-\delta)$ defined for $\delta \in [0,1)$ is strictly decreasing (it has negative derivative) and takes $0$ value for $\delta=0$, thus it is negative for $\delta \in (0,1).$
Thus
$$I_2(N) \to 0  \quad  \text{as } N \to \infty.$$
Now we estimate $I_3(N)$. Notice that
$$ \int_{1+\delta}^\infty    e^{ - \frac{ N  }{2 }  a}  a^{\frac{N-2}{2}  } da = \left(\frac{2}{N} \right)^{\frac{N}{2}} \cdot \int_{\frac{N}{2}( 1+\delta)}^\infty    e^{ - x}  x^{\frac{N}{2}-1  } dx = \left(\frac{2}{N} \right)^{\frac{N}{2}} \cdot \Gamma\left(\frac{N}{2},\frac{N}{2}(1+\delta) \right),$$
where $\Gamma (\cdot,\cdot)$ is the incomplete Gamma function recalled in \eqref{Incomplete:gamma}.
Using the asymptotics for large $N$ given in \eqref{Incomplete:Gamma} we get
\begin{align*}
 | I_3(N)| & \le  \|u\|_\infty \frac{1}{\sqrt{2\pi}} \sqrt{N/2} \,  e^{\frac{N}{2}}\left(\frac{2}{N} \right)^{\frac{N}{2}} \cdot \Gamma\left(\frac{N}{2},\frac{N}{2}(1+\delta) \right)  \\
& \sim  \|u\|_\infty \frac{1}{\sqrt{2\pi}} \sqrt{N/2} \,  e^{\frac{N}{2}}\left(\frac{2}{N} \right)^{\frac{N}{2}} \cdot  \left(\frac{N}{2}(1+\delta) \right)^{\frac{N}{2}} e^{-\frac{N}{2}(1+\delta)} \frac{1}{\frac{N}{2}\delta} \\
&= \|u\|_\infty \frac{1}{\sqrt{2\pi} \sqrt{2} \delta} \frac{1}{\sqrt{N}} e^{-\frac{N}{2} (\delta - \log(1+\delta))}.
 \end{align*}
The function $\delta \mapsto \delta - \log(1+\delta)$ defined for $\delta \in [0,1)$ is strictly increasing (it has positive derivative) and takes $0$ value for $\delta=0$, thus it is positive for $\delta \in (0,1).$
Thus
$$I_3(N) \to 0  \quad  \text{as } N \to \infty.$$
Therefore, we conclude  that
$$ V_{N}(x,0,\tau) =g_{N}(x,\tau) \rightarrow \frac{     1    }{ (4 \pi \tau)^{d/2}  }  \int_{\R^d}  e^{ - \frac{|x-\overline{x}|^2 }{4 \tau} }  u(\overline{x}, \tau ) \, d \overline{x} =: \phi(x,\tau) \quad \text{as } \, N\to \infty .$$
Passing to the limit in \eqref{eq:VN} as $N_k \to  \infty$ we get:
\begin{equation}\label{eq:V}
  \left\{ \begin{array}{ll}
  \frac{d}{d\tau}V(x,t,\tau)=\Delta_{x} V(x,t,\tau) +  V_t(x,t,\tau) &\text{for }\tau>0,\, t >0 \text{ and } x \in \R^d,  \\[2mm]
  V(x,0,\tau)=\phi(x,\tau) &\text{for } x \in \R^d, \, \tau>0, \\[1mm]
  V(x,t,0)=u(x,t) &\text{for } x \in \R^d, \, t>0.
    \end{array}
    \right.
\end{equation}
This is a linear equation with  diffusion and transport term in the upper half-space that we consider bellow in Lemma \ref{Lemma:V}. Problem \eqref{eq:V} has a unique bounded solution given by
$$ V(x,t,\tau)=e^{-\tau (-\Delta -\partial_t)}u(x,t)=\frac{1}{(4\pi\tau)^{d/2}}  \int_{\R^d} e^{-\frac{|\overline{x}|^2}{4\tau}} u(t+\tau, x- \overline{x}) \, d \overline{x}.$$
Since the limit $ V(x,t,\tau)$ is unique,  then the whole sequence $(V_N(x,t,\tau))_N$ is convergent  to $V(x,t,\tau)$ pointwise as $N\to \infty$, for any $(x,t,\tau)$.

In summary, we have obtained that
\begin{align*}
  (-\Delta_{x,y})^s \left[ u\big(x,\frac{|y|^2}{2N}\big)  \right] &= \frac{1}{\Gamma(-s)} \int_0^ \infty \left( V_N(x,\frac{|y|^2}{2N} ,\tau) - u\big(x,\frac{|y|^2}{2N}\big)  \right) \frac{d\tau}{\tau^ {1+s}}.
    \end{align*}
Thus,  if we denote $G_N(x, \frac{|y|^2}{2N}  )= (-\Delta_{x,y})^s \left[ u\big(x,\frac{|y|^2}{2N}\big)  \right] $
 then
 $$G_N \big(x,t\big) =  \frac{1}{\Gamma(-s)} \int_0^ \infty \left( V_N (x,t ,\tau) - u\big(x,t\big)  \right) \frac{d\tau}{\tau^ {1+s}}.$$
By letting $N\to \infty$ and in view of definition \ref{defn:FractBackwardHeat}, we obtain
$$G_{N} (x,t) \rightarrow \frac{1}{\Gamma(-s)} \int_0^ \infty \left( V (x,t ,\tau) - u\big(x,t\big)  \right) \frac{d\tau}{\tau^ {1+s}}
=  ( -\Delta_x-\partial_t)^{s} [u(x,t)]. $$

\end{proof}

\begin{lemma}\label{Lemma:V}
 The solution of \eqref{eq2:V-bis}
is unique within the class of bounded solutions and it is given by
\begin{equation*}
   V(x,t,\tau)=e^{-\tau (-\Delta -\partial_t)}u(x,t)=\frac{1}{(4\pi\tau)^{d/2}}  \int_{\R^d} e^{-\frac{|\overline{x}|^2}{4\tau}} u(t+\tau, x- \overline{x}) \, d \overline{x}.
   \end{equation*}
\end{lemma}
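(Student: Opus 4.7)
The plan is to reduce the problem \eqref{eq2:V-bis} to a standard heat equation via a moving-frame change of variables, use the Widder uniqueness theorem for bounded solutions of the heat equation, and verify the explicit formula by direct substitution.

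First I would introduce the new time-like variable $s = t + \tau$ and set
\[
W(x,s,\tau) := V(x,\,s-\tau,\,\tau),
\]
treating $s \ge 0$ as a parameter. By the chain rule, $\partial_\tau W = \partial_\tau V - \partial_t V$ and $\Delta_x W = \Delta_x V$, so the PDE $V_\tau = \Delta_x V + V_t$ collapses to the ordinary heat equation
\[
\partial_\tau W(x,s,\tau) = \Delta_x W(x,s,\tau), \qquad x\in\R^d,\ \tau \in (0,s],
\]
with initial datum $W(x,s,0) = V(x,s,0) = u(x,s)$. Because $\tilde u \in \mathcal{S}(\R^{d+1})$, $u(\cdot,s)$ is a bounded smooth function of $x$ for each fixed $s$.

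Next I would invoke the classical heat kernel representation for the bounded solution of the Cauchy problem to get
\[
W(x,s,\tau) = \frac{1}{(4\pi\tau)^{d/2}} \int_{\R^d} e^{-|x-\overline{x}|^2/(4\tau)}\, u(\overline{x},s)\, d\overline{x},
\]
and then return to the original variables via $V(x,t,\tau) = W(x,t+\tau,\tau)$ and the change $\overline{x} \mapsto x-\overline{x}$ to recover the formula claimed in the lemma. The $\tau=0$ initial condition $V(x,t,0)=u(x,t)$ is immediate from continuity of the heat semigroup, and the lateral condition at $t=0$ follows by evaluating at $\tau = s$: indeed $V(x,0,\tau) = W(x,\tau,\tau)$ equals precisely the heat-kernel convolution specifying $V(x,0,\tau)$ in \eqref{eq2:V-bis}, so no separate compatibility argument is needed.

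For uniqueness, if $V_1, V_2$ are two bounded solutions, their difference $V$ satisfies the same PDE with $u \equiv 0$. The transformed $W(x,s,\tau) = V(x,s-\tau,\tau)$ is then a bounded solution of the heat equation on $\R^d \times (0,\infty)$ with zero initial data (for each fixed $s$), hence $W \equiv 0$ by Widder's uniqueness theorem for bounded solutions of the heat equation, and therefore $V \equiv 0$.

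The main obstacle, which is in fact minor, is to justify that the change of variables is legitimate at the corner $\{t = 0,\ \tau = 0\}$ and that boundedness of $V$ on $\R^d\times[0,\infty)\times[0,\infty)$ transfers to boundedness of $W$ on the relevant slab — both follow from the linear nature of the substitution $s = t+\tau$ and the fact that $u$, being in $\mathcal{S}$, already provides uniform control.
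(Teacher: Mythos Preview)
Your proof is correct and follows essentially the same route as the paper: both reduce the problem along the characteristics $t+\tau=\text{const}$ (your parameter $s$, the paper's $C$) to the standard heat equation in $(x,\tau)$, and then invoke uniqueness for bounded solutions. Your version is in fact slightly more complete, since you also verify directly that the explicit formula satisfies the lateral boundary condition at $t=0$, whereas the paper only argues uniqueness and takes the formula as given.
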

\begin{proof}
  Since the equation is linear, then it is sufficient to prove that $0$ is the unique solution to
    \begin{equation*} 
  \left\{ \begin{array}{ll}
 \displaystyle \frac{d}{d\tau}\mathcal{V}(x,t,\tau)=\Delta_{x} \mathcal{V}(x,t,\tau) +  \mathcal{V}_t(x,t,\tau) &\text{for }\tau>0,\, t >0 \text{ and } x \in \R^d,  \\[3mm]
  \mathcal{V}(x,0,\tau)= 0  &\text{for } x \in \R^d, \, \tau>0, \\[1mm]
  \mathcal{V}(x,t,0)=0 &\text{for } x \in \R^d, \, t>0.
    \end{array}
    \right.
\end{equation*}
%
We use the method of characteristics (see for instance \cite[Ch.1]{Fritz-John}). Let $(t,\tau) \in \{ \tau +t =C \} $  for a fixed $C$.  Let $h(x,\tau)= \mathcal{V}(x,C-\tau, \tau).$ Then $h$ solves the PDE:
 \begin{equation*}
  \left\{ \begin{array}{ll}
\displaystyle \frac{d}{d\tau}h(x,\tau)  = \Delta_x h(x,\tau),  &\text{for }\tau>0 \text{ and } x \in \R^d,  \\[3mm]
 h(x,0)= 0  &\text{for } x \in \R^d.
    \end{array}
    \right.
\end{equation*}
Since the solution of the heat equation is unique within the class of functions that satisfy at most some exponential upper bound  (see, for instance, \cite{Evans}), and since our case is simpler as we work with bounded solutions, it follows that $h=0$. Consequently, $\mathcal{V}=0$ along the lines $\tau +t =C$, and we conclude that $V=0$.
\end{proof}

\begin{remark}
  The transformation given in Proposition \ref{prop:transf} works in fact for bounded  functions $u(x,t)$ with less regularity such that $(-\Delta_{x,y})^s \left[ u\big(x,\frac{|y|^2}{2N}\big)  \right]$  and  $ (-\Delta_x-\partial_t)^{s} [u(x,t)] $ are well defined and such that the associated heat equation used above has classical solutions.
\end{remark}

\section{Proof of Theorems \ref{main:thm} and \ref{Thm:CarlemanLp}}

\begin{proof}(of Theorem \ref{main:thm}) Let $u: \R^d \times [0,\infty) \to \R$ as in the hypothesis.  Let $N\ge 1$. We define the function  $f:\R^d \times \RN \to \R$  by
$$f(x,y)=u\big(x,\frac{r^2}{2N}\big) ,\quad \text{where } r=|y|,$$
and thus $f$ is radial with respect to the $y$ coordinate. As proved in Proposition \ref{prop:transf},   we have
\begin{equation*}
(-\Delta_{x,y})^s f(x,y) =   G_N\big(x,  \frac{|y|^2}{2N}\big),
\end{equation*}
for some $G_N:\R^d \times [0,\infty)\to \R$.   According to \cite[Thm16]{EilertsenJFA2001}, that we recalled in Lemma \ref{Lemma:Eilertsen} and we adapt it here to $N+d$ dimensions, the following weighted estimate holds:
$$
 \int_{\R^{N+d}} |f(x,y)|^2 |(x,y)|^{2\eta - N-d} dx dy \le C_{N,d} \int_{\R^{N+d}} \left| (-\Delta_{x,y})^{s}  f(x,y)\right|^2  |(x,y)|^{ 4s +2\eta - N-d} dx\, dy
 $$
for $s \ge 0$, $\alpha=0$ and  $\eta, N+d-2s-\eta\not\in -\mathbb{N}$, where the best constant $C_{N,d}$ is given in formula \eqref{CNd}.
Equivalently, we obtain:
\begin{align*}
 \int_{\R^{N+d}} |f(x,y)|^2 & (|x|^2+|y|^2)^{\frac{{2\eta - N-d}}{2}} dx dy  \\
 & \le C_{N,d} \int_{\R^{N+d}} \left| (-\Delta_{x,y})^s  f(x,y)\right|^2  \cdot(|x|^2+|y|^2)^{\frac{ 4s +2\eta - N-d}{2}} dx \, dy.
  \end{align*}
    By passing to polar coordinates with $r= |y| $ in both left and right hand side integrals, and using notation \eqref{FractLap:f}, we rewrite as follows:
\begin{align*}
&\int_{\R^{d}}\int_0^\infty \left| u(x,\frac{r^2}{2N} ) \right|^2 \cdot \left(|x|^2+r^2\right)^{\frac{{2\eta - N-d}}{2}}\cdot r^{N-1}\, dr \, dx \\
&\le
C_{N,d} \int_{\R^{d}}\int_0^\infty  \left| G_N(x,\frac{r^2}{2N}  )  \right|^2  \cdot(|x|^2+r^2)^{\frac{ 4s +2\eta - N-d}{2}} r^{N-1} dr dx.
\end{align*}
Equivalently, we obtain:
\begin{align*}
&\int_{\R^{d}}\int_0^\infty \left| u(x,\frac{r^2}{2N} ) \right|^2 \cdot \left( \frac{|x|^2}{r^2}+1 \right)^{\frac{{2\eta - N-d}}{2}} \cdot  r^{ 2\eta-d -1} \,dr \, dx \\
&\le C_{N,d} \int_{\R^{d}}\int_0^\infty  \left| G_N(x,\frac{r^2}{2N}  ) \big] \right|^2  \cdot\left(\frac{|x|^2}{r^2}+1 \right)^{\frac{ 4s +2\eta - N-d}{2}} r^{4s +2\eta -d-1} dr \, dx.
\end{align*}
In the left hand side integral we perform the change of variables $r^2/2N =t$. Then $r \, dr =N  \, dt$. Thus,
 after simplifying constants, we obtain:
\begin{align}\label{ineqN}
&\int_{\R^{d}}\int_0^\infty |u(x,t )|^2 \cdot \left( \frac{|x|^2}{2 N t}+1 \right)^{\frac{{2\eta - N-d}}{2}}    t^{\frac{  2\eta-d -2 }{2}}  dt dx \\
&\le \widetilde{C}_{N,d}  \int_{\R^{d}}\int_0^\infty  \left| G_N(x,t  )  \right|^2  \cdot \left(\frac{|x|^2}{2N t}+1 \right)^{\frac{ 4s +2\eta - N-d}{2}}  t^{\frac{ 4s +2\eta -d-2}{2}} dt \, dx. \nonumber
\end{align}
where $\widetilde{C}_{N,d}= (2N )^{2s} C_{N,d}$.

\noindent\emph{Step 3. Passing to the limit $N\to \infty$.} Then
$$
 \left( \frac{|x|^2}{2 N t}+1  \right)^{\frac{{2\eta - N-d}}{2}}  \to e ^{-\frac{|x|^2}{4t}} \quad \text{as }  N \to \infty.
 $$
We are now interested in the limit as $N \to \infty$ of the family  of functions $\{G_N(\cdot,\cdot)\}_{N\in \mathbb{N}^*}$.
From Proposition \ref{prop:transf}, we know that
$G_N(x, t) \rightarrow (-\partial_t-\Delta_x)^s [u(x,t)]$ pointwise for every $(x,t)$.
 Passing to the limit inside the integral can be justified via the Dominated Convergence Theorem since $u \in \mathcal{S}$.

Finally, we pass to the limit with the above constants. By formula \eqref{CNd} adapted to $N+d$ dimensions, we obtain:
\begin{align*}
   \widetilde{C}_{N,d} & = (2N )^{2s}  C_{N,d} = \left(\frac{N}{2}\right) ^{2s}    \max_{j=j_0, j_0+1} \left(  \frac{ \Gamma\big( \frac{\eta+j}{2} \big)   \Gamma\big( \frac{N+d-2s-\eta+j}{2} \big)   }
{ \Gamma\big( \frac{2s+\eta+j}{2} \big)   \Gamma\big( \frac{N+d-\eta+j}{2} \big)   } \right)^2,
\end{align*}
where $j_0=j_0(N)$ be the smallest non-negative integer satisfying
\begin{equation}\label{eq:j0}
(N+d - 2\eta )( N+d-4s-2\eta ) \le (N+d + 2j_0)^2.
\end{equation}
In order to study this constant, we show  first that $j_0$ defined by previous formula is bounded by a constant depending on $\eta$ and $\beta$. Let us write inequality \eqref{eq:j0} in the following equivalent formulation:
\begin{equation}\label{eq:j0-2}
 2 \eta(2s+\eta) \le (N+d) ( j_0 + s + \eta) + 2 j_0^2.
\end{equation}
 Indeed, observe that, if we take $j_1$ to be the smallest non-negative integer satisfying
$$0\le j_1 + s + \eta  \quad  \text{and}  \quad    \eta(2s+\eta) \le   j_1^2 ,  $$
then $j_1$ satisfies \eqref{eq:j0-2}, and thus $j_0 \in \{ 0,1,\dots, j_1 \}$. Notice that $j_1$ depends only on $\eta$ and $s$, and it does not depend on $N$ and $d$.
Thus, for all $N\ge 1$, we have $1\le j_0\le j_1(\eta,s)$.
Moreover, if $s+\eta \ge 0$ then $j_1 =j_0=0$.

Using the inequality \eqref{ineq:Gamma} we obtain that
$$ \left(\frac{N}{2}\right) ^{s} \cdot  \frac{    \Gamma\big( \frac{N+d-2s-\eta+j}{2} \big)   }
{   \Gamma\big( \frac{N+d-\eta+j}{2} \big)   }
\le\left(\frac{N}{2}\right) ^{s} \cdot  \left(   \frac{N+d-\eta+j-2}{2} \right)^{-s}   \to  1 \quad \text{as }N \to \infty.
$$
In fact, the quotient in the left hand side tends asymptotically to $1$ using \eqref{asymp:GammaQuotient}, thus it will not extinguish when passing to the limit. This implies that
$$ \widetilde{C}_{N,d}  \le    \widetilde{C}(\eta,s), $$
where
$$
\widetilde{C}(\eta,s) = \max_{j \in \{ 0,1,\dots, j_1+1 \}} \left(  \frac{ \Gamma\big( \frac{\eta+j}{2} \big)   }
{ \Gamma\big( \frac{2s+\eta+j}{2} \big)} \right)^2,$$

Therefore, by passing to the limit in \eqref{ineqN}, we obtain
\begin{align*}
&\int_{\R^{d}}\int_0^\infty |u(x,t )|^2 \cdot e ^{-\frac{|x|^2}{4t}}     t^{\frac{  2\eta-d -2 }{2}}  dt\,  dx \\
&\le  \widetilde{C}(\eta,s) \int_{\R^{d}}\int_0^\infty  \left| ( -\Delta_x-\partial_t)^{s} [u(x,t)]  \right|^2  \cdot e ^{-\frac{|x|^2}{4t}}   t^{2s + \frac{2\eta -d-2}{2}} dt \, dx.
\end{align*}

If $s=1$ and $\frac{  2\eta-d -2 }{2} <0$ then we recover the result for the Laplacian.

\end{proof}

\begin{proof}(of Theorem \ref{Thm:CarlemanLp})
We use Lemma \ref{Lemma:deNitti} adapted to $\R^{N+d}$ variables and  denoting $\theta=N+d-2\eta-2s$ in order to unify notation with the previous theorem.    We follow the same steps as in proof of Theorem \ref{main:thm} and we arrive at:
  \begin{align*}
  \left(\frac{b_{N,d}}{p}\right)^p
&\int_{\R^{d}}\int_0^\infty |u(x,t )|^p \cdot \left( \frac{|x|^2}{2 N t}+1 \right)^{\frac{{2\eta - N-d}}{2}}    t^{\frac{  2\eta-d -2 }{2}}  dt \, dx \\
&\le (2N)^{sp} \int_{\R^{d}}\int_0^\infty  \left| G_N(x,t  )  \right|^p  \cdot \left(\frac{|x|^2}{2N t}+1 \right)^{\frac{ 2\eta +2sp - N-d}{2}}  t^{\frac{ 2\eta -d+2sp-2}{2}} dt\, dx.
\end{align*}
  We estimate the constant for large $N$:
  \begin{align*}
    (2N)^{-sp}   \left(\frac{b_{N,d}}{p}\right)^p  & =  (2N)^{-sp} \,
    2^{2sp}  \cdot \left[ \frac{1}{p}
  \frac{
  \Gamma\left(  \frac{ - 2\eta-2s}{2} \right) \cdot  \Gamma\left( \frac{N+d-2\eta}{2}\right)
  }
  {
   \Gamma \left(\frac{2\eta-d}{2} \right) \cdot  \Gamma\left(  \frac{N+d-2\eta-2s}{2}\right)
   } \right]^p  \\
    &= \left[\frac{1}{p}
      \frac{
  \Gamma\left(  \frac{ - 2\eta-2s}{2} \right)
  }
  {
   \Gamma \left(\frac{2\eta-d}{2} \right)
   } \right]^p =:\left(\frac{\tilde{b}_{d,\eta,s}}{p}\right)^p.
      \end{align*}
    By passing to the limit $N\to \infty$  we obtain:
  \begin{align*}
&  \left(\frac{\tilde{b}_{d,\eta,s}}{p}\right)^p \int_{\R^{d}}\int_0^\infty |u(x,t )|^p   \cdot e ^{-\frac{|x|^2}{4t}}     t^{\frac{  2\eta-d -2 }{2}}  dt\, dx \\
&\le   \int_{\R^{d}}\int_0^\infty  \left| ( -\Delta_x-\partial_t)^{s} [u(x,t)]  \right|^p  \cdot e ^{-\frac{|x|^2}{4t}}   t^{ sp + \frac{2\eta -d-2}{2}} dt \, dx.
\end{align*}
\end{proof}

\section{Comments and Open Problems}

\begin{enumerate}

  \item It would be very interesting to prove the strong unique continuation using Theorem \ref{main:thm} as in local case done in \cite{EscauriazaDuke2000}. The main problem remains the  "localisation"  of functions when applying $(-\Delta_x-\partial_t)[\rho(x,t) u(x,t)]$ for some cut-off function $\rho$.  A fine quantitative estimate for $ (-\Delta_x-\partial_t)[\rho(x,t)]$ could be tried.

        \item In \cite[Lemma 2]{FernandezCPDEs2003}, Fern\'andez proves that if $u$ solves $(-\Delta -\partial_t) u =V u $, for $V$ sufficiently good,  and if $u(x,0)$  decays of infinite order with respect to the $x$ variable:
        \begin{equation}\label{decay-u-2}
    |u(x,0)| \le C_k |x|^k, \quad \text{ for all } k \in \mathbb{N}, \, (x,t)\in B_2 \times [0,2],
  \end{equation}
      then $u(x,0)=0$ in $B_2$.

   Theorem \ref{main:thm}  can be applied to functions with good decay in the time variable. It would be useful to prove estimate \eqref{Carleman} for $t+a$  instead of $t$ with some $a>0$, as done in \cite[Lemma 2]{FernandezCPDEs2003} for $s=1$, and, hopefully, to be able to use this estimate to prove the strong unique continuation for solutions to $(-\Delta -\partial_t)^s u =V u $ when assuming only a condition like \eqref{decay-u-2}.
    Using our strategy, we were not able to complete the proof of the  estimate \eqref{Carleman} with $t+a$.

  \end{enumerate}

\section{Appendix}

$\bullet$ The Gamma function satisfies the lower bound (we refer to \cite[formulla (5.6.1)]{OlMax})
\begin{equation}\label{Gamma:lower}
  1 < (2\pi)^{-\frac{1}{2}} \, x^{\frac{1}{2}-x} \, e^x \, \Gamma(x),\quad \quad \text{for all }x>0.
\end{equation}
Also, the quotient of Gamma functions satisfies (see \cite[formulla (5.6.4)]{OlMax})
\begin{equation}\label{ineq:Gamma}
  \frac{\Gamma(x+s)}{ \Gamma(x+1)} < x^{s-1},  \quad \text{for all } s\in (0,1),\, x>0.
\end{equation}

$\bullet$ The asymptotics for large argument of quotient of Gamma functions is given by (see \cite[Ch.~5, formula (5.11.12)]{OlMax}):
\begin{equation}\label{asymp:GammaQuotient}
\frac{\Gamma(z+a)}{\Gamma(z+b)} \sim z^{a-b},\quad \text{as } z \to +\infty.
\end{equation}

$\bullet$ The incomplete gamma function (see \cite[formula (8.11.6)]{OlMax}) for $z=\lambda a$ with $\lambda \in (0,1)$ fixed, as $a\to \infty$ behaves as
\begin{equation}\label{Incomplete:gamma}
  \gamma(a,z)= \int_{0}^{z} x^{a-1}e^{-x} dx  \sim   z^a e^{-z} \cdot \frac{1}{a-z}.
\end{equation}

$\bullet$ The incomplete gamma function (see \cite[formula (8.11.7)]{OlMax}) for $z=\lambda a$ with $\lambda > 1 $ fixed, as $a\to \infty$ behaves as
\begin{equation}\label{Incomplete:Gamma}\Gamma(a,z)= \int_{z}^\infty x^{a-1}e^{-x} dx  \sim   z^a e^{-z} \cdot \frac{1}{z-a}.
\end{equation}


$\bullet$ The modified Bessel function of first kind, for $\nu>-\frac{1}{2}$ can be represented as (\cite[formula (10.32.2)]{OlMax})
\begin{equation}\label{BesselI}
  I_\nu (z)= \frac{(\frac{1}{2}z) ^\nu}{ \sqrt{\pi} \, \Gamma(\nu + \frac{1}{2})} \int_{-1}^{1} (1-t^2)^{\nu - \frac{1}{2}} e^{\pm zt} dt.
\end{equation}

$\bullet$ Area of the $(N-1)$- sphere  $S^{N-1} \subset \mathbb{R}^N$ is
\begin{equation}\label{area:sphere}
  |S^{N-1}|= \frac{2 \pi^{N/2}}{\Gamma\left(\frac{N}{2}\right)}.
\end{equation}

$\bullet$ Let $a\in S^{N-1}$, after a change of variables of type $R \tilde{\sigma}=\sigma$ where $R$ is a rotation on $S^{N-1}$ that takes $a$ into $e_1$:
\begin{align}\label{Integral:Rotation}
\int_{S^{N-1}} g (a\cdot\sigma) d\sigma &= \int_{S^{N-1}} g ( e_1\cdot \tilde{\sigma} ) d\sigma  =   \int_{-1}^1 g (t) \int_{|\xi|=\sqrt{1-t^2}} 1 \, d\xi  \,dt\\  \nonumber
&=
| S^{N-2} | \int_{-1}^1 g (t)(1-t^2)^{  \frac{N-2}{2} }\, d\xi  \,dt .
\end{align}
We used cartesian coordinates for the integral $ (t,\xi) \in \R \times \R^{N-1}$ with $ t^2 + |\xi|^2 =1$. The set $\{ |\xi|=\sqrt{1-t^2} \} $ is the sphere of radius $\sqrt{1-t^2}$ in $\R^{N-1}$.

\bigskip

\bigskip

\noindent{\textbf{Acknowledgments}.} This work is motivated by many interesting discussions with Prof.  Luis Escauriaza in 2017 on the local case $s=1$, who pointed to the author references \cite{DaveyARMA2018} and \cite{EilertsenJFA2001}.

The author is grateful to Aingeru Fernández Bertolin and  Luz Roncal for useful comments on the manuscript.

\bibliographystyle{siam}

\end{document}